\DeclareMathOperator{\Irr}{Irr}
\renewcommand{\phi}[0]{\varphi}
\renewcommand{\theta}[0]{\vartheta}
\renewcommand{\epsilon}[0]{\varepsilon}
\newcommand{\Pro}{\text{$\mathbf{P}^1$}}
\newcommand{\F}{\text{$\mathbf{F}$}}
\newtheorem{theorem}{Theorem}[section]
\newtheorem{lemma}[theorem]{Lemma}
\newtheorem{corollary}[theorem]{Corollary}
\theoremstyle{definition}
\newtheorem{definition}[theorem]{Definition}
\newtheorem{example}[theorem]{Example}
\theoremstyle{remark}
\newtheorem{remark}[theorem]{Remark}
\numberwithin{equation}{section}
\begin{document}

\bibliographystyle{amsplain}

\date{}

\keywords{Finite fields, polynomials, sequences}
\subjclass[2010]{11R09, 11T55, 12E05}
\title[]
{Sequences of irreducible polynomials without prescribed coefficients over odd prime fields}

\author{S.~Ugolini}
\email{sugolini@gmail.com} 

\begin{abstract}
In this paper we construct infinite sequences of monic irreducible polynomials with coefficients in odd prime fields by means of a transformation introduced by Cohen in 1992. We make no assumptions on the coefficients of the first polynomial $f_0$ of the sequence, which belongs to $\F_p [x]$, for some odd prime $p$, and has positive degree $n$. If $p^{2n}-1 = 2^{e_1} \cdot m$ for some odd integer $m$ and  non-negative integer $e_1$, then, after an initial segment $f_0, \dots, f_s$ with $s \leq e_1$, the degree of the polynomial $f_{i+1}$ is twice the degree of $f_i$ for any $i \geq s$.  
\end{abstract}

\maketitle
\section{Introduction}
Constructing irreducible polynomials of arbitrary large degree over finite fields is desirable for many applications. One way to do that is by means of infinite sequences of irreducible polynomials. More precisely, one considers an irreducible polynomial $f_0 \in \F [x]$, where $\F$ is a finite field, and constructs inductively a sequence of irreducible polynomials $\{ f_i \}_{i \geq 0}$ via a transform that takes the $i$-th polynomial $f_i$ to $f_{i+1}$. Since the goal is getting irreducible polynomials of large degree, such sequences are constructed with the aim that the degree of $f_{i+1}$ is greater than the degree of $f_i$, possibly for any $i \geq 0$. 
Many transformations have been used to produce such sequences of irreducible polynomials. For example, Chu \cite{chu} used the transformation which takes a polynomial $f(x)$ to $f(x^3-3x)$, while more recently Garefalakis \cite{gar} and Stichtenoth and Topuzo\u{g}lu \cite{sti} employed the family of transformations which take a polynomial $f(x)$ of degree $n$ to $(bx+d)^n \cdot f\left( \frac{ax+c}{bx+d} \right)$, being $a,b,c$ and $d$ elements of the field of coefficients of the polynomial $f$.

In \cite{mey} Meyn relied upon the $Q$-transform to produce sequences of irreducible polynomials with coefficients in the field $\F_2$ with two elements. We remind that, if $f \in \F_2 [x]$ and has degree $n$, then its $Q$-transform is the polynomial $f^Q (x) = x^n \cdot f(x+x^{-1})$ of degree $2n$.
In \cite{mey} Meyn proved the following result.

\begin{theorem}
If $f(x) =x^n +a_{n-1} x^{n-1} + \dots + a_1 x + 1$ is an irreducible polynomial of degree $n$ over $\F_2$ such that $a_{n-1} = a_1 = 1$, then $f^Q (x) = x^{2n} + b_{2n-1} x^{2n-1} + \dots + b_1 x + 1$ is a self-reciprocal irreducible polynomial of $\F_2 [x]$ of degree $2n$. 
\end{theorem}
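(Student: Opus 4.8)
The plan is to pass to the roots of $f^Q$ in an algebraic closure $\overline{\mathbf{F}_2}$ and to reduce the whole statement to a single absolute trace computation over $\mathbf{F}_{2^n}$. I would first clear away the routine parts. Writing $f^Q(x)=x^n f(x+x^{-1})=\sum_{k=0}^{n} a_k\,x^{\,n-k}(x^2+1)^k$, where $a_n=1$ and $a_0=1$ (a monic irreducible polynomial over $\mathbf{F}_2$ has constant term $1$), the summand $k=n$ is the only one reaching degree $2n$ and the only one contributing a nonzero constant term; hence $f^Q$ is monic of degree $2n$ with $f^Q(0)=1$. Self-reciprocity is then immediate from $x^{2n}f^Q(1/x)=x^{2n}\cdot x^{-n}f(x^{-1}+x)=x^n f(x+x^{-1})=f^Q(x)$.

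For irreducibility, take any root $\beta\in\overline{\mathbf{F}_2}$ of $f^Q$. Since $f^Q(0)=1$ we have $\beta\neq 0$, and $0=f^Q(\beta)=\beta^n f(\beta+\beta^{-1})$ forces $\alpha:=\beta+\beta^{-1}$ to be a root of $f$; thus $\mathbf{F}_2(\alpha)=\mathbf{F}_{2^n}$ and $\alpha\neq 0$. As $\beta$ satisfies $t^2+\alpha t+1\in\mathbf{F}_{2^n}[t]$, we have $[\mathbf{F}_2(\beta):\mathbf{F}_{2^n}]\le 2$, so $[\mathbf{F}_2(\beta):\mathbf{F}_2]\in\{n,2n\}$. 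It therefore suffices to prove $\beta\notin\mathbf{F}_{2^n}$: in that case the minimal polynomial of $\beta$ over $\mathbf{F}_2$ has degree $2n$, it divides the monic polynomial $f^Q$ of degree $2n$, and hence equals it, so $f^Q$ is irreducible.

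To decide whether $\beta\in\mathbf{F}_{2^n}$, I would substitute $t=\alpha s$ into $t^2+\alpha t+1$ and divide by $\alpha^2$: the equation becomes the Artin--Schreier equation $s^2+s=\alpha^{-2}$, which has a solution in $\mathbf{F}_{2^n}$ if and only if $\Tr_{\mathbf{F}_{2^n}/\mathbf{F}_2}(\alpha^{-2})=0$, equivalently $\Tr_{\mathbf{F}_{2^n}/\mathbf{F}_2}(\alpha^{-1})=0$ since the trace is $\mathbf{F}_2$-linear and Frobenius-invariant. Hence $\beta\in\mathbf{F}_{2^n}$ precisely when this last trace vanishes. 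Now the conjugates of $\alpha$ over $\mathbf{F}_2$ are exactly the roots of $f$, so the conjugates of $\alpha^{-1}$ are the roots of the reciprocal polynomial $f^{*}(x)=x^n f(1/x)=x^n+a_1x^{n-1}+\cdots+a_{n-1}x+1$; consequently $\Tr_{\mathbf{F}_{2^n}/\mathbf{F}_2}(\alpha^{-1})$ equals the sum of the roots of $f^{*}$, that is, the coefficient of $x^{n-1}$ in $f^{*}$, which by hypothesis is $a_1=1$. So the trace is $1\neq 0$, $\beta\notin\mathbf{F}_{2^n}$, and $f^Q$ is a self-reciprocal irreducible polynomial of degree $2n$.

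I expect the only delicate point to be this trace criterion: arranging the substitution so that it is $\Tr(\alpha^{-1})$, and not $\Tr(\alpha)$, that controls the field of definition of $\beta$, and then recognising that trace as the coefficient of $x^{n-1}$ in the reciprocal polynomial $f^{*}$. Finally, I note that the hypothesis $a_{n-1}=1$ is not used for the statement as given; it guarantees that $f^Q$ again has both its sub-leading coefficient and its coefficient of $x$ equal to $1$ — since $\Tr_{\mathbf{F}_{2^{2n}}/\mathbf{F}_2}(\beta)=\Tr_{\mathbf{F}_{2^n}/\mathbf{F}_2}(\alpha)=a_{n-1}$ and self-reciprocity forces $b_1=b_{2n-1}$ — so that the $Q$-transform can be iterated.
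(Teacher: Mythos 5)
The paper does not prove this statement: it is quoted verbatim from Meyn's article \cite{mey}, so there is no internal proof to compare against. Your argument is correct and complete, and it is the standard route to Meyn's theorem: monicity, the constant term, and self-reciprocity of $f^Q$ follow from the expansion $\sum_k a_k x^{n-k}(x^2+1)^k$; a root $\beta$ of $f^Q$ sits above a root $\alpha$ of $f$ via $\beta^2+\alpha\beta+1=0$, so irreducibility reduces to showing $\beta\notin\F_{2^n}$; the substitution $t=\alpha s$ turns the quadratic into the Artin--Schreier equation $s^2+s=\alpha^{-2}$, whose solvability in $\F_{2^n}$ is governed by $\Tr_{\F_{2^n}/\F_2}(\alpha^{-2})=\Tr_{\F_{2^n}/\F_2}(\alpha^{-1})$; and that trace is the sum of the roots of the reciprocal polynomial, namely $a_1=1$. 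Your closing observation is also accurate: only $a_1=1$ is needed for this one step, while $a_{n-1}=1$ serves to propagate the hypothesis to $f^Q$ (via $b_{2n-1}=\Tr_{\F_{2^{2n}}/\F_2}(\beta)=\Tr_{\F_{2^n}/\F_2}(\alpha)=a_{n-1}$ and $b_1=b_{2n-1}$ by self-reciprocity), so that the transform can be iterated. Your trace criterion is precisely the characteristic-$2$ counterpart of the non-square criterion $f(2)f(-2)$ that the paper does record (Lemma \ref{intro_lem_1}) for odd $q$; the rest of the paper works in odd characteristic with the operator $R$ and a dynamical analysis of $\theta_{\frac{1}{2}}$, which is a different mechanism altogether. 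I see no gap in your proof.
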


In \cite{seq2}, inspired by the work of Meyn, we considered what happens if the coefficients $a_{n-1}$ and  $a_1$ of the polynomial $f$ are not as in the hypotheses of the just cited theorem. 

Later to Meyn's work, Cohen \cite{coh} dealt with sequences of polynomials over finite fields of odd characteristic. Moving from the consideration that Meyn had limited success using the $Q$-transform in odd characteristic, Cohen introduced another transform, which he called the operator $R$. Such an operator takes any monic polynomial $f$ of degree $n$ over a field of odd characteristic to the monic polynomial
\begin{equation*}
f^R (x) = (2x)^n \cdot f \left( \dfrac{1}{2} (x+x^{-1}) \right)
\end{equation*}  
of degree $2n$. Cohen abbreviated the product $f(1) \cdot f(-1)$ to $\lambda(f)$ and then proved the following result.

\begin{theorem}\label{intro_thm_1}
Let $f_0 (x)$ be a monic irreducible polynomial of degree $n \geq 1$ over $GF(q)$, $q$ odd, where $n$ is even if $q \equiv 3 \pmod{4}$. Suppose also that $\lambda (f_0)$ is a non-square in $GF(q)$. For each $m \geq  1$ define $f_m$ by
\begin{equation*}
f_m (x) = f_{m-1}^R (x).
\end{equation*}
Then, for each $m = 1, 2, 3, \dots$, $f_m$ is an irreducible polynomial over $GF(q)$ of degree $n \cdot 2^m$ and order a divisor of $q^{n \cdot 2^{m-1}}+1$.
\end{theorem}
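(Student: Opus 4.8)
The plan is to argue by induction on $m$, isolating a single ``doubling step'' that can then be iterated. Concretely, I would prove the following: if $f$ is a monic irreducible polynomial of degree $d$ over $GF(q)$, with $d$ even whenever $q\equiv 3\pmod4$, and with $\lambda(f)$ a non-square in $GF(q)$, then $f^R$ is monic irreducible of degree $2d$, its order divides $q^d+1$, the degree $2d$ is even, and $\lambda(f^R)$ is again a non-square in $GF(q)$. Applying this with $(f,d)=(f_0,n)$, then with $(f_1,2n)$, and so on, produces the whole sequence: at the $m$-th stage the degree is $n\cdot 2^m$ and the order divides $q^{n\cdot 2^{m-1}}+1$, which is exactly the asserted conclusion.

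For the doubling step I would first unwind the definition. Writing $\tfrac12(x+x^{-1})=\tfrac{x^2+1}{2x}$, one checks that $f^R(x)=(2x)^d f\left(\tfrac{x^2+1}{2x}\right)$ is monic of degree $2d$, that $f^R(0)=1$, and (using that the characteristic is odd) that a nonzero $\beta$ is a root of $f^R$ precisely when $\tfrac12(\beta+\beta^{-1})$ is a root of $f$; since $f^R(0)\neq 0$, all roots of $f^R$ arise in this way. Fix a root $\alpha\in GF(q^d)$ of $f$. The roots of $f^R$ lying over $\alpha$ are the two solutions of $\beta^2-2\alpha\beta+1=0$; their product is $1$, so they are $\beta$ and $\beta^{-1}$, with $\beta=\alpha\pm\sqrt{\alpha^2-1}$. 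The crux is to show that $\alpha^2-1$ is a non-square in $GF(q^d)$. Expanding the norm $N$ from $GF(q^d)$ to $GF(q)$ gives $N(\alpha^2-1)=\prod_{i=0}^{d-1}(\alpha^{q^i}-1)(\alpha^{q^i}+1)$, and comparing with $f(1)=\prod_i(1-\alpha^{q^i})$ and $f(-1)=\prod_i(-1-\alpha^{q^i})$ yields $N(\alpha^2-1)=f(1)f(-1)=\lambda(f)$. Since $q$ is odd, the squares in $GF(q^d)^*$ and in $GF(q)^*$ each have index $2$; the norm is surjective and carries squares to squares, hence induces a surjection between the two quotients by squares, which, being a surjection of groups of order $2$, is an isomorphism. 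Therefore $\alpha^2-1$ is a non-square in $GF(q^d)$ if and only if $\lambda(f)$ is a non-square in $GF(q)$; in particular $\alpha\neq\pm1$, so $\alpha^2-1\neq 0$.

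Granting this, $\sqrt{\alpha^2-1}\in GF(q^{2d})\setminus GF(q^d)$, so $\beta\notin GF(q^d)$; as $\beta$ satisfies a quadratic over $GF(q^d)=GF(q)(\alpha)$, we get $[GF(q)(\beta):GF(q)]=2d$, and since $f^R$ is monic of degree $2d$ with $\beta$ as a root it must be the minimal polynomial of $\beta$, hence irreducible. For the order, the conjugate of $\beta$ over $GF(q^d)$ is $\beta^{q^d}$, which must be the other root $\beta^{-1}$ of $\beta^2-2\alpha\beta+1$, so $\beta^{q^d+1}=1$; as the order of an irreducible polynomial with nonzero constant term equals the multiplicative order of its roots, the order of $f^R$ divides $q^d+1$. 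Finally, $f^R(1)=2^d f(1)$ and $f^R(-1)=(-2)^d f(-1)$, so $\lambda(f^R)=(-1)^d 2^{2d}\lambda(f)$; here $2^{2d}$ is a square, and $(-1)^d$ is a square in $GF(q)$ in every admissible case ($-1$ is a square if $q\equiv1\pmod4$, while $d$ is even, so $(-1)^d=1$, if $q\equiv3\pmod4$), whence $\lambda(f^R)$ is a non-square. This completes the doubling step, and iterating it proves the theorem. I expect the genuinely delicate point to be the ``reflection of non-squares'' across the norm: it is what converts the hypothesis on $\lambda(f_0)$ into the non-squareness of $\alpha^2-1$ needed for the very first doubling, and the parity assumption ``$n$ even if $q\equiv3\pmod4$'' is exactly what is needed to keep $\lambda$ a non-square under the transform throughout the iteration.
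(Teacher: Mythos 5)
Your proof is correct. Note first that the paper does not actually prove this statement: it is quoted from Cohen's 1992 paper, and the text only remarks that the proof rests on Meyn's criterion (Lemma \ref{intro_lem_1}: $f^Q$ is irreducible iff $f(2)f(-2)$ is a non-square). You instead give a complete, self-contained argument. Your single ``doubling step'' is sound: the factorization $f^R(x)=\prod_i(x^2-2\alpha_i x+1)$ gives monicity, $f^R(0)=1$, and the description of the roots as the pairs $\beta,\beta^{-1}$ over each $\alpha_i$; the identity $N_{GF(q^d)/GF(q)}(\alpha^2-1)=f(1)f(-1)=\lambda(f)$ checks out, and the fact that the norm induces an isomorphism on the order-two quotients by squares correctly transfers ``non-square'' from $\lambda(f)$ down in $GF(q)$ up to $\alpha^2-1$ in $GF(q^d)$, which yields irreducibility of $f^R$ and, via $\beta^{q^d}=\beta^{-1}$, the order divisor $q^d+1$. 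The computation $\lambda(f^R)=(-1)^d 2^{2d}\lambda(f)$ and the parity discussion correctly show the hypothesis propagates, with the condition ``$n$ even if $q\equiv 3\pmod 4$'' needed only at the first step. Compared with the route the paper points to, your norm computation is essentially the proof of Meyn's lemma transported to the $R$-transform (one could alternatively observe that $f^R=g^Q$ for $g(y)=2^n f(y/2)$, so $g(2)g(-2)=2^{2n}\lambda(f)$ and Meyn's lemma applies directly); what your version buys is that everything is proved from scratch rather than reduced to a cited criterion.
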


The proof of  Theorem \ref{intro_thm_1} is based upon the following lemma due to Meyn \cite{mey}.

\begin{lemma}\label{intro_lem_1}
Let $f$ be a monic irreducible polynomial over $GF(q)$, $q$ odd. Then $f^Q$ is irreducible over $GF(q)$ if and only if $f(2) f(-2)$ is a non-square in $G(q)$.
\end{lemma}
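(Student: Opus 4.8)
The plan is to pass from $f^Q$ to a root of $f$, recast the irreducibility of $f^Q$ as a splitting condition for a quadratic over $GF(q^n)$, and then push that condition down to $GF(q)$ via the norm map. Throughout, $q$ odd is what makes the square/non-square dichotomy behave.

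Write $n=\deg f$ and fix a root $\alpha$ of $f$, so that $GF(q)(\alpha)=GF(q^n)$. A direct expansion of $f^Q(x)=x^n f(x+x^{-1})$ shows it is monic of degree exactly $2n$ with $f^Q(0)=1$, so $0$ is not among its roots. If $\gamma$ is a root of $g_\alpha(x):=x^2-\alpha x+1\in GF(q^n)[x]$ in a fixed algebraic closure, then $\gamma\neq 0$, its companion root is $\gamma^{-1}$, and $\gamma+\gamma^{-1}=\alpha$, whence $f^Q(\gamma)=\gamma^n f(\gamma+\gamma^{-1})=\gamma^n f(\alpha)=0$; thus every such $\gamma$ is a root of $f^Q$. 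Since $\alpha=\gamma+\gamma^{-1}\in GF(q)(\gamma)$ we get $GF(q^n)\subseteq GF(q)(\gamma)$, while $[GF(q)(\gamma):GF(q^n)]\leq 2$ because $\gamma$ satisfies $g_\alpha$; hence $GF(q)(\gamma)$ is either $GF(q^n)$ or $GF(q^{2n})$. As $f^Q$ is monic of degree $2n$ and vanishes at $\gamma$, it is irreducible over $GF(q)$ exactly when $[GF(q)(\gamma):GF(q)]=2n$, i.e. when $g_\alpha$ is irreducible over $GF(q^n)$, i.e. (using that $q^n$ is odd) when the discriminant $\alpha^2-4$ is a non-square in $GF(q^n)$. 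The degenerate case $\alpha^2-4=0$ forces $n=1$ and $f=x\mp 2$, so $f^Q=(x\mp 1)^2$ is reducible and $f(2)f(-2)=0$ is a square; the claimed equivalence holds there too, and I would dispose of it in one line.

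It then remains to show that $\alpha^2-4$ is a non-square in $GF(q^n)$ if and only if $f(2)f(-2)$ is a non-square in $GF(q)$. Using that $f$ is separable with roots $\alpha,\alpha^q,\dots,\alpha^{q^{n-1}}$, together with the identity $(2-a)(-2-a)=a^2-4$ and $4^{q^i}=4$, a short computation yields
\[
f(2)\,f(-2)=\prod_{i=0}^{n-1}\bigl((\alpha^{q^i})^2-4\bigr)=\prod_{i=0}^{n-1}(\alpha^2-4)^{q^i}=N_{GF(q^n)/GF(q)}(\alpha^2-4).
\]
Then I would invoke the standard fact --- immediate from Euler's criterion and the relation $N_{GF(q^n)/GF(q)}(\delta)^{(q-1)/2}=\delta^{(q^n-1)/2}$ --- that a nonzero $\delta\in GF(q^n)$ is a square in $GF(q^n)$ precisely when $N_{GF(q^n)/GF(q)}(\delta)$ is a square in $GF(q)$. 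Applying this with $\delta=\alpha^2-4$ closes the chain of equivalences and proves the lemma.

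I do not anticipate a genuine obstacle: the two load-bearing ingredients are the norm identity $f(2)f(-2)=N_{GF(q^n)/GF(q)}(\alpha^2-4)$ and the compatibility of ``being a square'' with the norm, both of which are short. The only things needing care are the degree and constant-term bookkeeping for $f^Q$ that makes the step ``$f^Q$ irreducible $\iff \deg_{GF(q)}\gamma=2n$'' clean, and the separate handling of the degenerate root $\alpha=\pm 2$.
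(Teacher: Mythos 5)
The paper states this lemma without proof, citing it as a result of Meyn, so there is no in-paper argument to compare against. Your proof is correct and complete: the reduction of the irreducibility of $f^Q$ to that of $x^2-\alpha x+1$ over $GF(q^n)$ (equivalently, to $\alpha^2-4$ being a non-square there), the norm identity $f(2)f(-2)=N_{GF(q^n)/GF(q)}(\alpha^2-4)$, and the fact that a nonzero element of $GF(q^n)$ is a square if and only if its norm is a square in $GF(q)$ are all sound, and you correctly dispose of the degenerate case $\alpha=\pm 2$. This is essentially the standard argument for Meyn's lemma.
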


Some years later Kyuregyan (\cite{kyu1} \and \cite{kyu2}) described possible quadratic transformations 
\begin{equation}\label{intro_eq_1}
P(x) \to (dx^2+rx+h)^n P \left( \frac{ax^2+bx+c}{dx^2+rx+h} \right)
\end{equation}
into the ring $\F_q[x]$, where $q$ is an odd prime power, allowing to construct irreducible polynomials of higher degree from the given polynomial $P(x)$. In \cite{kyu1}, Section 2, the author focuses on the problem of finding conditions under which the quadratic mapping 
\begin{equation*}
P(x) \to F(x) = (H(a,d))^{-1} (dx^2+rx+h)^n P \left( \frac{ax^2+bx+c}{dx^2+rx+h} \right)
\end{equation*}
produces an irreducible polynomial over $\F_q$,
being
\begin{displaymath}
H(a,d) = 
\begin{cases}
a^n & \text{if $d=0$,}\\
d^n P \left( \frac{a}{d} \right) & \text{if $d \not = 0$}.
\end{cases}
\end{displaymath}

In Section 2, Corollaries 1-5 of the same paper many results are furnished on the irreducibility of $F(x)$, given the irreducibility of $P(x)$. We cite here Corollary 5, which is a generalization of Lemma \ref{intro_lem_1}.

\begin{corollary}
Let $P(x) \not = x$ be an irreducible polynomial of degree $n \geq 1$ over $\F_q$. Then, for any $b \in \F_q$, the self-reciprocal polynomial
\begin{equation*}
F(x) = x^n P \left( \frac{x^2+bx+1}{x} \right)
\end{equation*}
is irreducible over $\F_q$ if and only if the element $P(b+2) P(b-2)$ is a non-square in $\F_q$. 
\end{corollary}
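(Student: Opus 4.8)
The plan is to realize $F$ as the $Q$-transform of an affine translate of $P$ and then invoke Meyn's criterion, Lemma~\ref{intro_lem_1}. First I would observe that replacing $P$ by $cP$ for $c\in\F_q\setminus\{0\}$ changes neither the irreducibility of $F$ nor the square class of $P(b+2)P(b-2)$ (the latter is merely multiplied by the square $c^{2}$), so that I may assume $P$ monic. Then I would rewrite the rational argument as
\begin{equation*}
\frac{x^{2}+bx+1}{x}=x+b+x^{-1},
\end{equation*}
so that $F(x)=x^{n}\,P\!\left(x+b+x^{-1}\right)$.

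Next I would set $g(x)=P(x+b)$. Since $x\mapsto x+b$ induces a degree-preserving automorphism of $\F_q[x]$, the polynomial $g$ is again monic irreducible of degree $n$ over $\F_q$, and one computes
\begin{equation*}
g^{Q}(x)=x^{n}\,g\!\left(x+x^{-1}\right)=x^{n}\,P\!\left(x+x^{-1}+b\right)=F(x).
\end{equation*}
Hence $F=g^{Q}$, and applying Lemma~\ref{intro_lem_1} to $g$ shows that $F=g^{Q}$ is irreducible over $\F_q$ if and only if $g(2)\,g(-2)$ is a non-square in $\F_q$. Since $g(2)=P(b+2)$ and $g(-2)=P(b-2)$, this says precisely that $F$ is irreducible over $\F_q$ if and only if $P(b+2)\,P(b-2)$ is a non-square in $\F_q$, which is the assertion.

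Because the whole argument is just a translation trick, I expect no genuine obstacle; the only points that warrant a line of verification are the following. First, the expression $x^{n}P\!\left((x^{2}+bx+1)/x\right)$ defining $F$ is really a polynomial of degree $2n$: writing $P(y)=\sum_{k=0}^{n}c_{k}y^{k}$ with $c_{n}=1$, one obtains $F(x)=\sum_{k=0}^{n}c_{k}\,x^{n-k}(x^{2}+bx+1)^{k}$, which has constant term $1$ and leading term $x^{2n}$, so $F$ is a self-reciprocal polynomial of degree $2n$ coprime to $x$. Second, Lemma~\ref{intro_lem_1}, stated for odd $q$, uses the same convention $g^{Q}(x)=x^{n}g(x+x^{-1})$ recalled for $\F_2$ in the introduction, which is exactly the formula used above; the hypothesis $P\neq x$ serves only to exclude the degenerate reading of that formula. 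With these remarks the proof would be complete.
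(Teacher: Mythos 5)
Your proof is correct. The paper itself gives no proof of this statement---it is quoted from Kyuregyan's work and explicitly described as ``a generalization of Lemma \ref{intro_lem_1}''---and your reduction is precisely the argument that remark points to: writing $(x^{2}+bx+1)/x=x+b+x^{-1}$, setting $g(x)=P(x+b)$ so that $F=g^{Q}$, and applying Meyn's criterion with $g(\pm 2)=P(b\pm 2)$. The preliminary normalization to monic $P$ (harmless, since rescaling multiplies $P(b+2)P(b-2)$ by a square) and the verification that $F$ is a genuine self-reciprocal polynomial of degree $2n$ are both handled correctly.
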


In the subsequent Section of \cite{kyu1} different recurrent methods for constructing irreducible polynomials over $\F_q$ are given. Among others, we would like to cite Theorem  3 in \cite{kyu1}, Section $3$.

\begin{theorem}\label{intro_thm_2}
Let $P(x) \not = x$ be an irreducible polynomial of degree $n \geq 1$ over $\F_q$, where $n$ is even if $q \equiv 3 \pmod{4}$, $r, h, \delta \in \F_q$ and $r \not = 0$, $\delta \not = 0$. Suppose that $P\left(\frac{2 \delta- rh}{r^2} \right) P \left( - \frac{2 \delta + rh}{r^2} \right)$ is a non-square in $\F_q$. Define
\begin{equation*}
F_0 (x) = P(x),
\end{equation*}
\begin{equation*}
F_k (x) = \left(2x + \frac{2h}{r} \right)^{t_{k-1}} F_{k-1} \left( \left( x^2 + \frac{4 \delta^2-(hr)^2}{r^4} \right) \Big/ \left( 2x + \frac{2h}{r} \right) \right), k \geq 1,
\end{equation*}
where $t_k = n 2^k$ denotes the degree of $F_k (x)$. Then $F_k(x)$ is an irreducible polynomial over $\F_q$ of degree $n 2^k$ for every $k \geq 1$.
\end{theorem}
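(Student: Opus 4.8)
The plan is to reduce Theorem~\ref{intro_thm_2} to Cohen's Theorem~\ref{intro_thm_1} by an affine change of variable that turns the quadratic map driving Kyuregyan's recursion into an affine conjugate of the map $z \mapsto \tfrac12(z+z^{-1}) = \tfrac{z^2+1}{2z}$ underlying the $R$-operator. Set $c = \tfrac{4\delta^2-(hr)^2}{r^4}$, $D(x) = 2x+\tfrac{2h}{r}$ and $L(z)=\tfrac{z^2+c}{D(z)}$, so that the recursion reads $F_k(x) = D(x)^{t_{k-1}}F_{k-1}(L(x))$. First I would search for an affine map $A(x)=\alpha x+\beta$, invertible over $\F_q$, with $L = A\circ M\circ A^{-1}$ where $M(z)=\tfrac{z^2+1}{2z}$. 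Matching denominators forces $\beta/\alpha = h/r$, and matching the remaining coefficients forces $\alpha^{2} = \bigl(c+\tfrac{h^2}{r^2}\bigr)^{-1}$. The decisive observation is that $c+\tfrac{h^2}{r^2} = \tfrac{4\delta^2}{r^4} = \bigl(\tfrac{2\delta}{r^2}\bigr)^{2}$ is a nonzero square in $\F_q$ (this is where $\delta\neq0$ enters), so one may take $A(x)=\tfrac{2\delta}{r^2}x-\tfrac{h}{r}$, defined entirely over $\F_q$; a short computation also gives $D(A(x))=\tfrac{4\delta}{r^2}\,x$, a monomial, which is exactly what revives the $R$-operator structure.

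Next I would introduce the normalized polynomials $\widetilde{F}_k(x):=\bigl(\tfrac{2\delta}{r^2}\bigr)^{-t_k}F_k(A(x))$, which are monic of degree $t_k$ and which are irreducible over $\F_q$ precisely when $F_k$ is, since affine substitution and nonzero scaling preserve irreducibility and degree. Using $L\circ A = A\circ M$ and $D\circ A = \tfrac{4\delta}{r^2}x$, substituting $x\mapsto A(x)$ in the recursion gives $F_k(A(x)) = \bigl(\tfrac{4\delta}{r^2}\bigr)^{t_{k-1}}x^{t_{k-1}}F_{k-1}(A(M(x)))$; rewriting $F_{k-1}(A(M(x)))$ in terms of $\widetilde{F}_{k-1}$ and dividing by $\bigl(\tfrac{2\delta}{r^2}\bigr)^{t_k}$, the scalars combine to leave precisely the factor $(2x)^{t_{k-1}}$, whence $\widetilde{F}_k = (\widetilde{F}_{k-1})^{R}$ for every $k\geq1$. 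Thus, after conjugation by $A$, Kyuregyan's recursion is exactly Cohen's $R$-recursion started from $\widetilde{F}_0 = \bigl(\tfrac{2\delta}{r^2}\bigr)^{-n}P(A(x))$.

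It then remains to verify the hypotheses of Theorem~\ref{intro_thm_1} for $\widetilde{F}_0$: it is monic irreducible of degree $n\geq1$ over $\F_q$, with $n$ even when $q\equiv3\pmod4$ by hypothesis, and, since $A(1)=\tfrac{2\delta-rh}{r^2}$ and $A(-1)=-\tfrac{2\delta+rh}{r^2}$, one has $\lambda(\widetilde{F}_0)=\widetilde{F}_0(1)\widetilde{F}_0(-1) = \bigl(\tfrac{2\delta}{r^2}\bigr)^{-2n}\,P\!\bigl(\tfrac{2\delta-rh}{r^2}\bigr)P\!\bigl(-\tfrac{2\delta+rh}{r^2}\bigr)$. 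As $\bigl(\tfrac{2\delta}{r^2}\bigr)^{-2n}$ is a nonzero square, $\lambda(\widetilde{F}_0)$ is a non-square in $\F_q$ if and only if $P\!\bigl(\tfrac{2\delta-rh}{r^2}\bigr)P\!\bigl(-\tfrac{2\delta+rh}{r^2}\bigr)$ is, which is exactly the standing assumption. Theorem~\ref{intro_thm_1} then yields that $\widetilde{F}_k$ is irreducible over $\F_q$ of degree $n2^k$ for all $k\geq1$, and transporting back through $A^{-1}$ gives the same for $F_k$.

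The computations are routine; the only genuinely delicate point is the scalar bookkeeping in the middle step, where one must keep track that the exponent occurring is $t_{k-1}=\deg F_{k-1}$ (not $t_k$) and that, once $F_k(A(x))$ is rewritten via $L\circ A = A\circ M$ and $D\circ A=\tfrac{4\delta}{r^2}x$, passing to the monic normalization on both sides forces the constants to combine into exactly $(2x)^{t_{k-1}}$, so that the conjugated recursion is $(\widetilde{F}_{k-1})^{R}$ on the nose and not merely a scalar multiple of it. Once the two identities $L\circ A = A\circ M$ and $D\circ A=\tfrac{4\delta}{r^2}x$ are in hand, the rest is mechanical.
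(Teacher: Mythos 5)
Your argument is correct, but note that the paper itself offers no proof of this statement: Theorem \ref{intro_thm_2} is quoted from Kyuregyan's paper \cite{kyu1} purely as background, so there is no internal proof to compare against. Kyuregyan's own derivation runs through his general analysis of the quadratic transformations (\ref{intro_eq_1}) (the machinery behind the Corollaries 1--5 cited just above), whereas you reduce the statement to Cohen's Theorem \ref{intro_thm_1} via the affine conjugation $A(x)=\tfrac{2\delta}{r^2}x-\tfrac{h}{r}$; that is a cleaner and more self-contained route, and it fits the spirit of the surrounding paper, which is built entirely on Cohen's operator $R$. I verified the two identities your proof hinges on, $L\circ A=A\circ M$ and $D\circ A=\tfrac{4\delta}{r^2}x$, the scalar bookkeeping giving $\widetilde{F}_k=(\widetilde{F}_{k-1})^{R}$ (the powers of $\tfrac{2\delta}{r^2}$ and $\tfrac{4\delta}{r^2}$ do collapse to $(2x)^{t_{k-1}}$ precisely because $t_k=2t_{k-1}$), and the transfer of the non-square condition through $A(1)=\tfrac{2\delta-rh}{r^2}$, $A(-1)=-\tfrac{2\delta+rh}{r^2}$; all of these check out. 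Two small blemishes to fix. First, the intermediate claims ``matching denominators forces $\beta/\alpha=h/r$'' and ``$\alpha^2=\bigl(c+\tfrac{h^2}{r^2}\bigr)^{-1}$'' should read $\beta=-h/r$ and $\alpha^2=c+\tfrac{h^2}{r^2}$; your final $A$ satisfies the corrected equations, so nothing downstream is affected, but as written the derivation contradicts its own conclusion. Second, Cohen's theorem requires a \emph{monic} starting polynomial while Kyuregyan's statement does not assume $P$ monic, so you should say explicitly that one may first divide $P$ by its leading coefficient $\ell$: this multiplies $P\bigl(\tfrac{2\delta-rh}{r^2}\bigr)P\bigl(-\tfrac{2\delta+rh}{r^2}\bigr)$ by the square $\ell^{-2}$ and rescales every $F_k$ by $\ell$, hence changes neither the hypothesis nor the irreducibility conclusion.
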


By Theorem \ref{intro_thm_1}  and Theorem \ref{intro_thm_2} one can produce an infinite sequence of monic irreducible polynomials starting from a monic irreducible polynomial $f_0$ with coefficients in $GF(q)$,  provided that some hypotheses on the polynomial $f_0$ are satisfied.  Nevertheless, one can still wonder what happens if any assumption on $f_0$ is removed. More precisely, in this paper we concentrate on the construction of infinite sequences of monic irreducible polynomials over odd prime fields, starting from any monic irreducible polynomial $f_0 \in \F_p [x]$, where $p$ is an odd prime. To do that, we will rely upon the same operator $R$ introduced by Cohen.

Firstly we introduce the following notation.  
\begin{definition}\label{intro_def_nu}
If $m = 2^e \cdot k$, for some odd integer $k$ and non-negative integer $e$, then we denote by $\nu_2 (m)$ the exponent of the greatest power of $2$ which divides $m$, namely $\nu_2 (m) = e$.
\end{definition}

%In Section \ref{construction} we describe a possible procedure to construct infinite sequences of irreducible polynomials with coefficients in $\F_p$ (see Theorem \ref{seq_6}). Such sequences are constructed inductively.

Take now a monic irreducible polynomial $f_0 \in \F_p [x]$ of degree $n$, making no assumptions on its coefficients. For $i \geq 0$, if $f_i^{R}$ is irreducible, then we set $f_{i+1} := f_i^{R}$. In the case that $f_i^{R}$ is not irreducible, then it factors as the product of two monic irreducible polynomials of the same degree (Theorem \ref{seq_4}) and we set $f_{i+1}$ equal to one of these two factors. If $\nu_2 (p^{2n}-1) = {e_1}$, then, after an initial segment $f_0, \dots, f_s$ of the sequence with $s \leq e_1$, the degree of $f_{i+1}$ is twice the degree of $f_{i}$ for $i \geq s$. The procedure just described involves at most $e_0+e_1+1$ polynomial factorizations, even though in most cases not more than $e_1+1$ factorizations are required (see Remark \ref{seq_7}). While factorization of polynomials is a burden one would like to avoid, in our case any factorization of a polynomial of degree $2n$ into two polynomials of the same degree $n$ amounts to solving a linear system of at most $n$ linear equations (see Section \ref{note}).

\section{Preliminaries}
Let $f$ be a monic polynomial of degree $n$ defined over the field $\F_p$ with $p$ elements, for some odd prime $p$. Then,
\begin{equation*}
f^R (x) = (2x)^n \cdot f(\theta_{\frac{1}{2}}(x)),
\end{equation*} 
where $\theta_{\frac{1}{2}}$ is the map which can be defined over the projective line $\Pro (\F_q)  = \F_q \cup \{ \infty \}$ of the finite field $\F_q$ with $q=p^n$ element in such a way:
\begin{displaymath}
\theta_{\frac{1}{2}} (x) = 
\begin{cases}
\infty & \text{if $x= 0$ or $\infty$,}\\
\frac{1}{2} (x+x^{-1}) & \text{otherwise}.
\end{cases}
\end{displaymath}

In \cite{SUk} the dynamics of the map $\theta_{\frac{1}{2}}$ was studied. At first we noticed that $\theta_{\frac{1}{2}}$ is conjugated to the square map. Indeed, if $x$ is any element of $\Pro (\F_q)$, then
\begin{equation}\label{conj}
\theta_{\frac{1}{2}} (x) = \psi \circ s_2 \circ \psi (x), 
\end{equation}
where $s_2$ and $\psi$ are maps defined on $\Pro (\F_q)$ as follows:
\begin{displaymath}
s_2 (x) = 
\begin{cases}
x^2 & \text{if $x \in \F_q$,} \\
\infty & \text{if $x = \infty$,}
\end{cases}
\quad
\psi(x) =
\begin{cases}
\dfrac{x+1}{x-1} & \text{if $x \in \Pro (\F_q) \backslash \{1, \infty \}$,} \\
1 & \text{if $x = \infty$,}\\
\infty & \text{if $x=1$.}
\end{cases}
\end{displaymath}

We say that an element $x \in \Pro (\F_q)$ is $\theta_{\frac{1}{2}}$-periodic if and only if $\theta_{\frac{1}{2}}^k (x) = x$ for some positive integer $k$. The smallest such $k$ will be called the period of $x$ with respect to the map $\theta_{\frac{1}{2}}$. Nonetheless, if an element $x \in \Pro (\F_q)$ is not $\theta_{\frac{1}{2}}$-periodic, then it is preperiodic, namely $\theta_{\frac{1}{2}}^l (x)$ is $\theta_{\frac{1}{2}}$-periodic for some positive integer $l$. 

As explained in \cite{SUk}, we can associate a graph $G^q_{\theta_{\frac{1}{2}}}$ with the map $\theta_{\frac{1}{2}}$ over $\Pro (\F_q)$. To do that, we label the vertices of the graph by the elements of $\Pro (\F_q)$. Then, if $\alpha, \beta \in \Pro(\F_q)$ and $\beta = \theta_{\frac{1}{2}} (\alpha)$, we connect with an arrow $\alpha$ to $\beta$.

We notice in passing that, being $\theta_{\frac{1}{2}}$ conjugated with the square map, the graph $G^q_{\theta_{\frac{1}{2}}}$ is isomorphic to the graph associated with the square map in $\Pro(\F_q)$. Indeed, the dynamics of the square map (and more in general of power maps $x^k$) and the structure of the associated graphs over finite fields have been extensively studied (see for example \cite{cho},  \cite{rog}, \cite{sha}, \cite{vas} ). 

The  reader can find more details about the length and the number of the cycles of $G_{\theta_{\frac{1}{2}}}^q$ in \cite{SUk}. In the present paper we are just interested in the structure of the reversed binary tree attached to any vertex of a cycle. For that reason we state here a reduced version of Theorem 2.6 \cite{SUk} just adopting the notation given in Definition \ref{intro_def_nu}.

\begin{theorem}\label{thm_tree}
Let $\alpha \in \Pro(\F_q)$ be a $\theta_{\frac{1}{2}}$-periodic element. If $\alpha \in \{-1, 1 \}$, then $\alpha$ is not root of a tree in $G_{\theta_{\frac{1}{2}}}^q$. If $\alpha \not \in \{-1, 1 \}$, then $\alpha$ is the root of a reversed binary tree of depth $\nu_2 (q-1)$ in $G_{\theta_{\frac{1}{2}}}^q$. Moreover, the root has one child and all the other vertices, except for the leaves, have two children.
\end{theorem}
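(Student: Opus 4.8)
The plan is to reduce everything to the squaring map by means of the conjugation \eqref{conj}. First I would check that $\psi$ is an involution of $\Pro(\Fq)$ (this is a one-line computation, $\psi\circ\psi=\mathrm{id}$), so that \eqref{conj} says precisely that $\psi$ conjugates $s_2$ to $\theta_{\frac{1}{2}}$; consequently $\psi$ is an isomorphism of directed graphs $G^q_{s_2}\to G^q_{\theta_{\frac{1}{2}}}$, carrying the arrow $x\to s_2(x)$ to the arrow $\psi(x)\to\theta_{\frac{1}{2}}(\psi(x))$. Since $\psi(0)=-1$ and $\psi(\infty)=1$, it then suffices to establish the analogous assertions for $G^q_{s_2}$, with the exceptional pair $\{-1,1\}$ replaced by $\{0,\infty\}$.

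For $s_2$ this is a direct analysis. The points $0$ and $\infty$ are fixed and each is its own only preimage, so neither is the root of a tree. Identifying $\Fq^{*}$ with $\Z/(q-1)\Z$, the map $s_2$ corresponds to multiplication by $2$; writing $q-1=2^{e}t$ with $t$ odd (so $e=\nu_2(q-1)\ge 1$, since $q$ is odd), an element of $\Fq^{*}$ is $s_2$-periodic if and only if its order is odd, i.e. if and only if it lies in the unique subgroup $C_t$ of order $t$, and every other element reaches $C_t$ under iteration of $s_2$. Hence the periodic vertices of $G^q_{s_2}$ are $\{0,\infty\}\cup C_t$, and the trees of the graph are precisely those rooted at the elements of $C_t$.

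Finally I would describe the tree rooted at a given $\beta\in C_t$. Since $e\ge 1$ we have $C_t\subseteq(\Fq^{*})^2$, so $\beta$ has exactly two square roots in $\Fq^{*}$; one of them lies in $C_t$ and is the predecessor of $\beta$ on its cycle, while the other has order twice that of $\beta$, is not periodic, and is therefore the unique child of $\beta$ in the tree. For the lower levels one argues by induction on the distance $\ell\ge 1$ from $\beta$: every vertex $v$ at level $\ell$ satisfies $\nu_2(\ord v)=\ell$; such a $v$ is a square in $\Fq^{*}$, and hence has two distinct square roots, both non-periodic and both at level $\ell+1$, exactly when $\ell<e$, whereas for $\ell=e$ it is a non-square and so a leaf. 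Thus every branch descends all the way to level $e$, the tree has depth $e=\nu_2(q-1)$, the root has one child, and all remaining internal vertices have two children; transporting this picture through $\psi$ yields the theorem. The only point requiring care is the inductive control of $\nu_2(\ord v)$ along the branches, which is exactly what pins down the common depth of the leaves; the passage to $\theta_{\frac{1}{2}}$ and the handling of the exceptional vertices $0,\infty$ (equivalently $-1,1$) are immediate.
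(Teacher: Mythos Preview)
Your argument is correct. Note, however, that the paper does not actually prove this theorem: it is quoted as a reduced version of Theorem~2.6 of \cite{SUk}, and no proof is given here. Your approach---transporting the problem to the squaring map via the involution $\psi$ and analysing multiplication by $2$ on $\Z/(q-1)\Z$---is precisely the route the paper signals in the paragraph surrounding \eqref{conj}, where it observes that $G^q_{\theta_{1/2}}$ is isomorphic to the graph of the square map; this is almost certainly the method of \cite{SUk} as well. The key computation you identify (tracking $\nu_2(\ord v)$ along branches to pin down the uniform leaf depth) is indeed the heart of the matter, and your treatment of the root (one periodic square root on the cycle, one non-periodic child) and of the exceptional pair $\{0,\infty\}\leftrightarrow\{-1,1\}$ is accurate.
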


We fix once for the remaining part of the section an odd prime $p$ and introduce some notation.

\begin{definition}
If $f \in \F_p [x] \backslash \{ x \}$ is a monic irreducible polynomial and $\alpha$ is a non-zero root of $f$ in an appropriate extension of $\F_p$, then we denote by $\tilde{f}$ the minimal polynomial of $\theta_{\frac{1}{2}} (\alpha)$ over $\F_p$.
\end{definition}

\begin{definition}
We denote by $\Irr_p$ the set of all monic irreducible polynomials of $\F_p [x]$ different from $x+1$ and $x-1$. If $n$ is a positive integer, then $\Irr_p(n)$ denotes the set of all polynomials of $\Irr_p$ of degree $n$.
\end{definition}

Consider the following lemma.

\begin{lemma}\label{seq_1}
Let $n$ be a positive integer and suppose that $\nu_2 (p^n-1) \geq 2$. Then, $\nu_2 (p^{2n}-1) = \nu_2 (p^n-1)+1$.
\end{lemma}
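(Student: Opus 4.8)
The plan is to factor $p^{2n}-1 = (p^n-1)(p^n+1)$ and track the powers of $2$ dividing each factor. First I would write $p^n - 1$ and $p^n + 1$; since $p$ is odd, $p^n$ is odd, so both $p^n-1$ and $p^n+1$ are even, and they are two consecutive even integers, hence exactly one of them is divisible by $4$ while the other is $\equiv 2 \pmod 4$. The key observation is that the hypothesis $\nu_2(p^n-1) \geq 2$ forces the factor $p^n-1$ to be the one divisible by $4$, and therefore $p^n+1 \equiv 2 \pmod 4$, i.e.\ $\nu_2(p^n+1) = 1$.

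From there the conclusion is immediate from additivity of the $2$-adic valuation over products: $\nu_2(p^{2n}-1) = \nu_2(p^n-1) + \nu_2(p^n+1) = \nu_2(p^n-1) + 1$. I would spell out the elementary fact that for two consecutive even integers $2k$ and $2k+2$ (equivalently $p^n-1$ and $p^n+1$), exactly one of $k$ and $k+1$ is even, so one of the two numbers has $2$-adic valuation exactly $1$; the hypothesis $\nu_2(p^n-1)\geq 2$ rules out $p^n-1$ being that number, pinning it down to be $p^n+1$.

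There is essentially no obstacle here; the only thing to be slightly careful about is the logical structure of the "exactly one is divisible by $4$" step, which rests on $\gcd$ considerations mod $4$ rather than anything deep. One clean way to phrase it: $(p^n+1) - (p^n-1) = 2$, so $\nu_2$ of the two numbers cannot both exceed $1$ (if both were divisible by $4$, their difference would be divisible by $4$, contradiction), and since $\nu_2(p^n-1) \geq 2$ by hypothesis, it must be $\nu_2(p^n+1) = 1$. Then add.
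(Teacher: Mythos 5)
Your proof is correct and follows essentially the same route as the paper: factor $p^{2n}-1=(p^n-1)(p^n+1)$, use the hypothesis $\nu_2(p^n-1)\geq 2$ to deduce $p^n\equiv 1\pmod 4$ and hence $\nu_2(p^n+1)=1$, then add valuations. The difference-equals-$2$ phrasing is just a minor repackaging of the paper's congruence argument.
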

\begin{proof}
Since $\nu_2 (p^n-1) \geq 2$, we have that $p^n \equiv 1 \pmod{4}$ and $p^n+1 \equiv 2 \pmod{4}$, namely $\nu_2 (p^n+1)=1$. 
Summing all up,
\begin{displaymath}
\nu_2 (p^{2n}-1) = \nu_2( (p^n-1) \cdot (p^n+1)) = \nu_2(p^n-1)+1.
\end{displaymath}
\end{proof}

\begin{remark}
Notice that we cannot remove the hypothesis $\nu_2 (p^n-1) \geq 2$ in Lemma \ref{seq_1}. In fact, if $\nu_2 (p^n-1)=1$, then anything can happen. Consider for example the primes $23$ and $31$ with $n=1$. We have that $\nu_2 (22) = 1$ and $\nu_2 (30) = 1$. Nevertheless, $\nu_2 (23^2-1)=  4$, while $\nu_2 (31^2-1) = 6$.	 
\end{remark}

We will make use of the following technical lemma in the forthcoming theorem.
\begin{lemma}\label{seq_2}
Let $f$ be a polynomial of positive degree $n$ of $\F_p [x]$. Suppose that $\beta$ is a root of $f$ and that $\beta = \theta_{\frac{1}{2}} (\alpha)$ for some $\alpha, \beta$ in suitable extensions of $\F_p$. Then, $\alpha$ and $\alpha^{-1}$ are roots of $f^{R}$.
\end{lemma}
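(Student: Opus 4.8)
The plan is to obtain both assertions by a single direct substitution into the definition $f^R(x) = (2x)^n \cdot f\!\left(\frac{1}{2}(x+x^{-1})\right)$, preceded by a short remark guaranteeing that this substitution is legitimate.

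First I would check that $\alpha \notin \{0, \infty\}$. Since $\beta$ is a root of the polynomial $f \in \F_p[x]$, it lies in a finite extension of $\F_p$; in particular $\beta \neq \infty$. But $\theta_{\frac{1}{2}}(0) = \theta_{\frac{1}{2}}(\infty) = \infty$ by definition of $\theta_{\frac{1}{2}}$, so $\alpha \in \{0,\infty\}$ would force $\beta = \infty$, a contradiction. Hence $\alpha$ is a nonzero element of a suitable extension of $\F_p$, and therefore $\theta_{\frac{1}{2}}(\alpha) = \frac{1}{2}(\alpha + \alpha^{-1}) = \beta$; likewise $\theta_{\frac{1}{2}}(\alpha^{-1}) = \frac{1}{2}(\alpha^{-1} + \alpha) = \beta$.

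Next, writing $f(x) = \sum_{j=0}^{n} a_j x^j$, one sees that $(2x)^n f\!\left(\frac{1}{2}(x+x^{-1})\right) = \sum_{j=0}^{n} a_j\, 2^{n-j}\, x^{n-j}(x^2+1)^j$ involves only nonnegative powers of $x$, so $f^R \in \F_p[x]$ and evaluating it at a nonzero element of an extension of $\F_p$ is meaningful. Then
\[
f^R(\alpha) = (2\alpha)^n\, f\!\left(\tfrac{1}{2}(\alpha+\alpha^{-1})\right) = (2\alpha)^n f(\beta) = (2\alpha)^n \cdot 0 = 0,
\]
and the identical computation with $\alpha^{-1}$ in place of $\alpha$ yields $f^R(\alpha^{-1}) = (2\alpha^{-1})^n f(\beta) = 0$. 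Thus $\alpha$ and $\alpha^{-1}$ are both roots of $f^R$.

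The computation is a one-line substitution; the only step needing any care — and the only place where the hypothesis that $\beta$ is an honest (finite) root of $f$ is used — is ruling out $\alpha \in \{0,\infty\}$, so that $\theta_{\frac{1}{2}}(\alpha)$ is given by the formula $\frac{1}{2}(\alpha+\alpha^{-1})$ and not by $\infty$.
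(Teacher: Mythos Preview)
Your proof is correct and follows essentially the same approach as the paper's own proof: direct substitution of $\alpha$ and $\alpha^{-1}$ into the defining formula $f^R(x)=(2x)^n f(\theta_{\frac{1}{2}}(x))$ and using $f(\beta)=0$. Your version is simply more careful, adding the verification that $\alpha\notin\{0,\infty\}$ and that $f^R$ is indeed a polynomial, points the paper leaves implicit.
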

\begin{proof}
The thesis follows easily evaluating $f^{R}$ at $\alpha$ and $\alpha^{-1}$.  In fact,
\begin{eqnarray*}
f^{R} (\alpha) & = & (2  \alpha)^n \cdot f(\theta_{\frac{1}{2}}(\alpha)),\\
f^{R} (\alpha^{-1}) & = & (2  \alpha^{-1})^n \cdot f(\theta_{\frac{1}{2}}(\alpha))
\end{eqnarray*}
and $f(\theta_{\frac{1}{2}}(\alpha)) = f(\beta) = 0$.
\end{proof}

\begin{theorem}\label{seq_3}
Let $f$ be a polynomial of $\Irr_p(n) \backslash \{ x  \}$ for some positive integer $n$. The following hold.
\begin{itemize}
\item If the set of roots of $f$ is not inverse-closed, then $\tilde{f} \in \Irr_p(n)$.
\item If the set of roots of $f$ is inverse-closed, then $n$ is even and $\tilde{f} \in \Irr_p (n/2)$.
\end{itemize}
\end{theorem}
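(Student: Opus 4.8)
The plan is to fix a nonzero root $\alpha$ of $f$ in $\overline{\F_p}$, so that $\F_p(\alpha)=\F_{p^n}$ and the roots of $f$ are exactly the Frobenius conjugates $\alpha,\alpha^p,\dots,\alpha^{p^{n-1}}$, a single orbit under $\sigma\colon x\mapsto x^p$, and to set $\beta=\theta_{\frac{1}{2}}(\alpha)$, so that $\tilde f$ is the minimal polynomial of $\beta$, of degree $d:=[\F_p(\beta):\F_p]$. The whole statement then reduces to computing $d$ in the two cases and to checking $\tilde f\in\Irr_p$ in both. The latter is easy: since $f\in\Irr_p(n)\setminus\{x\}$ we have $\alpha\notin\{0,1,-1\}$ (otherwise $f$ would be $x$, $x-1$ or $x+1$), and then $\beta\notin\{1,-1\}$, because $\beta=\pm1$ would force $(\alpha\mp1)^2=0$; hence $\tilde f\neq x\pm1$, i.e.\ $\tilde f\in\Irr_p$.

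Next I would bound $d$. As $\beta=\frac{1}{2}(\alpha+\alpha^{-1})\in\F_p(\alpha)$ we have $\F_p(\beta)\subseteq\F_p(\alpha)$, and $\alpha$ is a root of $X^2-2\beta X+1\in\F_p(\beta)[X]$, whose second root is $\alpha^{-1}$ because the product of the two roots equals $1$. Hence $[\F_p(\alpha):\F_p(\beta)]\in\{1,2\}$, so either $d=n$, or $n$ is even and $d=n/2$.

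The core of the argument is matching these two possibilities with the inverse-closedness of the root set, using two facts: $\theta_{\frac{1}{2}}$ commutes with $\sigma$ (immediate from $\theta_{\frac{1}{2}}(\gamma)=(\gamma^2+1)/(2\gamma)$ and the fact that $\sigma$ fixes the coefficients), and, since $\beta$ is finite and $\neq\pm1$, the polynomial $X^2-2\beta X+1$ is separable, so the only elements of $\overline{\F_p}$ mapped to $\beta$ by $\theta_{\frac{1}{2}}$ are the two \emph{distinct} elements $\alpha$ and $\alpha^{-1}$. If the root set of $f$ is inverse-closed, then $\alpha^{-1}=\sigma^{j}(\alpha)$ for some $0<j<n$, and applying $\theta_{\frac{1}{2}}$ yields $\sigma^{j}(\beta)=\theta_{\frac{1}{2}}(\alpha^{-1})=\beta$; so $\sigma^{j}$ is a nontrivial element of $\operatorname{Gal}(\F_p(\alpha)/\F_p(\beta))$, forcing $[\F_p(\alpha):\F_p(\beta)]=2$, whence $n$ is even and $d=n/2$. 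Conversely, if $[\F_p(\alpha):\F_p(\beta)]=2$, let $\tau$ be the nontrivial element of $\operatorname{Gal}(\F_p(\alpha)/\F_p(\beta))$; it fixes $\beta$ but moves $\alpha$, and $\theta_{\frac{1}{2}}(\tau\alpha)=\tau\beta=\beta$ together with $\tau\alpha\neq\alpha$ forces $\tau\alpha=\alpha^{-1}$; since $\tau$ is a power of $\sigma$, $\alpha^{-1}$ is a conjugate of $\alpha$, hence a root of $f$, and because the root set is a single $\sigma$-orbit it is then inverse-closed. Putting this together: if the root set is not inverse-closed then $d=n$ and $\tilde f\in\Irr_p(n)$; if it is inverse-closed then $n$ is even, $d=n/2$, and $\tilde f\in\Irr_p(n/2)$.

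The main obstacle — really the only delicate point — is the exclusion of $\pm1$: one must be certain that $\beta\neq\pm1$, so that $\theta_{\frac{1}{2}}^{-1}(\beta)=\{\alpha,\alpha^{-1}\}$ consists of two distinct points. That separability is precisely what lets the two Galois arguments identify the nontrivial element of $\operatorname{Gal}(\F_p(\alpha)/\F_p(\beta))$ with the map $\alpha\mapsto\alpha^{-1}$, and it is also what guarantees $\tilde f\notin\{x-1,x+1\}$, i.e.\ $\tilde f\in\Irr_p$.
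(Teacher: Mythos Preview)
Your proof is correct and takes a somewhat different route from the paper's. The paper treats the two cases separately: in the non-inverse-closed case it argues by contradiction, supposing $\deg\tilde f = n/2$ and observing that then $\tilde f^{\,R}$ would be a degree-$n$ polynomial having both $\alpha$ and $\alpha^{-1}$ among its roots (by Lemma~\ref{seq_2}), forcing $\tilde f^{\,R} = f$ and hence the root set of $f$ to be inverse-closed; in the inverse-closed case it shows directly that $\theta_{\frac{1}{2}}$ induces a surjective $2$-to-$1$ map from the roots of $f$ onto the roots of $\tilde f$, giving $\deg\tilde f = n/2$. Your argument instead handles both directions uniformly by identifying the nontrivial element of $\operatorname{Gal}(\F_p(\alpha)/\F_p(\beta))$, when it exists, with the inversion $\alpha\mapsto\alpha^{-1}$. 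This Galois-theoretic packaging is cleaner and more conceptual, and it yields the biconditional in one stroke; the paper's version is more elementary and stays closer to the $R$-transform machinery used throughout the paper. You also make explicit the verification that $\beta\neq\pm1$, i.e.\ that $\tilde f\in\Irr_p$, a point the paper leaves implicit.
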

\begin{proof}
Suppose that the set of roots of $f$ is not inverse-closed. If $\beta = \theta_{\frac{1}{2}}(\alpha)$ for some root $\alpha$ of $f$, then $\beta$ is root of $\tilde{f}$. Since $\alpha$ is root of the polynomial $x^2 - 2 \beta x + 1$ and the degree of $\alpha$ over $\F_p$ is $n$, the degree of $\beta$ over $\F_p$ is either $n$ or $n/2$. Suppose that $\tilde{f}$ has degree $n/2$ (and $n$ is even) and consider the polynomial $g= \tilde{f}^{R}$. The polynomial $g$ has degree $n$ and $\alpha, \alpha^{-1}$ are among its roots. We deduce that $g$ is the minimal polynomial of $\alpha$, namely $g = f$. This implies that the set of roots of $f$ is inverse-closed in contradiction with the initial assumption. Therefore, $\tilde{f} \in \Irr_p(n)$.

Suppose now that the set of roots of $f$ is inverse-closed and consider any root $\alpha$ of $f$. Since $\alpha = \alpha^{-1}$ if and only if $\alpha^2 = 1$, namely $\alpha = \pm 1$, and  $f$ is different from $x+1$ and $x-1$, we conclude that $\alpha \not = \alpha^{-1}$ and the degree of $f$ is even. Let $\beta = \theta_{\frac{1}{2}} (\alpha)$. By definition $\tilde{f}$ is the minimal polynomial of $\beta$. We notice in passing that any root of $f$ is of the form $\alpha^{p^i}$, while any root of $\tilde{f}$ is of the form $\beta^{p^i}$ for some non-negative integer $i$. Moreover, 
\begin{displaymath}
\beta^{p^i} = \theta_{\frac{1}{2}}(\alpha)^{p^i} = \dfrac{1}{2^{p^i}} \cdot \left( \alpha+ \alpha^{-1} \right)^{p^i} = \dfrac{1}{2} \cdot \left(\alpha^{p^i} + \alpha^{-p^i} \right) = \theta_{\frac{1}{2}} (\alpha^{p^i}).
\end{displaymath}
Therefore, the map $\theta_{\frac{1}{2}}$ defines a surjective $2$-$1$ correspondence between the set of roots of $f$ and the set of roots of $\tilde{f}$, implying that the degree of $\tilde{f}$ is $n/2$.
\end{proof}

\begin{theorem}\label{seq_4}
Let $f(x) = x^n+a_{n-1} x^{n-1} + \dots + a_1 x + a_0 \in \Irr_p (n)$ for some positive integer $n$. The following hold.
\begin{itemize}
\item $0$ is not root of $f^{R}$.
\item The set of roots of $f^{R}$ is closed under inversion.
\item Either $f^{R} \in \Irr_p (2n)$ or $f^{R}$ splits into the product of two polynomials $m_{\alpha}, m_{\alpha^{-1}}$ in $\Irr_p (n)$, which are respectively the minimal polynomial of $\alpha$ and $\alpha^{-1}$, for some $\alpha \in \F_{p^n}$. Moreover, in the latter case at least one among $\alpha$ and $\alpha^{-1}$ is not $\theta_{\frac{1}{2}}$-periodic.
\end{itemize}
\end{theorem}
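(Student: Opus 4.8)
The plan is to analyse the roots of $f^R$ in an extension of $\F_p$ through the identity $f^R(x)=(2x)^n f\!\left(\tfrac{1}{2}(x+x^{-1})\right)$ together with the self-reciprocity it forces. First I would note that $\tfrac{1}{2}(x+x^{-1})$ is fixed by $x\mapsto x^{-1}$, which immediately yields $x^{2n}f^R(1/x)=f^R(x)$; expanding $(2x)^n f\!\left(\tfrac{1}{2}(x+x^{-1})\right)=\sum_{k=0}^{n}a_k\,2^{n-k}x^{n-k}(x^2+1)^k$ shows that $f^R$ is a genuine monic polynomial of degree exactly $2n$. Comparing coefficients in $x^{2n}f^R(1/x)=f^R(x)$ then gives that the constant term of $f^R$ equals its leading coefficient $1$, so $f^R(0)=1\neq 0$ (first bullet); and for any root $\gamma$ of $f^R$ one has $\gamma\neq 0$, hence $\gamma^{2n}f^R(1/\gamma)=f^R(\gamma)=0$ forces $f^R(1/\gamma)=0$ (second bullet).

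For the third bullet, fix a root $\alpha$ of $f^R$. Since $\alpha\neq 0$, the defining identity evaluated at $\alpha$ gives $f\!\left(\tfrac{1}{2}(\alpha+\alpha^{-1})\right)=0$, i.e.\ $\rho:=\theta_{\frac{1}{2}}(\alpha)=\tfrac{1}{2}(\alpha+\alpha^{-1})$ is a root of $f$; thus $\F_{p^n}=\F_p(\rho)\subseteq\F_p(\alpha)$, while $\alpha$ is a root of $x^2-2\rho x+1\in\F_p(\rho)[x]$, so $[\F_p(\alpha):\F_p(\rho)]\leq 2$. Hence $[\F_p(\alpha):\F_p]$ is either $2n$ or $n$. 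If it is $2n$, the minimal polynomial $m_\alpha$ of $\alpha$ has degree $2n=\deg f^R$ and divides $f^R$, so $f^R=m_\alpha\in\Irr_p(2n)$. If it is $n$, then $\alpha,\alpha^{-1}\in\F_{p^n}$; here $\alpha\neq\pm 1$ because $f^R(\pm 1)=(\pm 2)^n f(\pm 1)\neq 0$ (as $f\neq x\pm 1$ and $f$ is irreducible), so $\alpha\neq\alpha^{-1}$, and $\alpha^{-1}$ is not a Frobenius conjugate of $\alpha$: if $\alpha^{-1}=\alpha^{p^i}$ then, since $\theta_{\frac{1}{2}}$ commutes with the Frobenius (as in the proof of Theorem \ref{seq_3}), $\rho=\theta_{\frac{1}{2}}(\alpha^{-1})=\rho^{p^i}$, forcing $n\mid i$ and $\alpha^{-1}=\alpha$, a contradiction. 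Therefore $m_\alpha$ and $m_{\alpha^{-1}}$ are distinct monic irreducible factors of $f^R$, each of degree $n$ and with roots avoiding $\pm 1$, hence both in $\Irr_p(n)$, and a degree count gives $f^R=m_\alpha\cdot m_{\alpha^{-1}}$.

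It remains to establish the last assertion. Suppose, in this second case, that both $\alpha$ and $\alpha^{-1}$ are $\theta_{\frac{1}{2}}$-periodic; they lie in $\Pro(\F_{p^n})$ and satisfy $\theta_{\frac{1}{2}}(\alpha)=\theta_{\frac{1}{2}}(\alpha^{-1})=\rho$. Now $\theta_{\frac{1}{2}}$ sends periodic elements to periodic elements and is injective on the finite set of periodic elements of $\Pro(\F_{p^n})$, so it restricts there to a permutation; in particular every periodic element has a unique periodic preimage. Since $\rho$ is periodic (being the image of the periodic element $\alpha$) and both $\alpha$ and $\alpha^{-1}$ are preimages of $\rho$, we would get $\alpha=\alpha^{-1}$, contradicting $\alpha\neq\alpha^{-1}$. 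Hence at least one of $\alpha,\alpha^{-1}$ is not $\theta_{\frac{1}{2}}$-periodic.

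The algebraic steps above are routine; I expect the one genuinely delicate point to be this final clause, where one needs the structural fact that in $G^{q}_{\theta_{\frac{1}{2}}}$ (equivalently, via the conjugacy \eqref{conj}, in the functional graph of the square map) a periodic vertex has exactly one periodic in-neighbour. This is what forces $\alpha=\alpha^{-1}$ once both are assumed periodic, and it is essentially the only place where the cycle structure recalled around Theorem \ref{thm_tree} is used.
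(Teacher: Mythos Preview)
Your argument is correct and follows essentially the same route as the paper's proof: establish that $f^R$ has nonzero constant term, use the invariance of $\theta_{\frac{1}{2}}$ under $x\mapsto x^{-1}$ to get inverse-closure of the root set, argue that any root $\alpha$ has degree $n$ or $2n$ over $\F_p$ via $x^2-2\rho x+1$, and in the degree-$n$ case split $f^R$ as $m_\alpha\cdot m_{\alpha^{-1}}$ before handling the periodicity claim. The only differences are cosmetic: where the paper computes the constant term directly and then invokes Theorem~\ref{seq_3} to rule out $\alpha^{-1}$ being a conjugate of $\alpha$, you package the first two bullets through the self-reciprocity identity $x^{2n}f^R(1/x)=f^R(x)$ and give a direct Frobenius argument (the same one underlying Theorem~\ref{seq_3}); and where the paper appeals to the tree picture of Theorem~\ref{thm_tree} for the non-periodicity clause, you phrase the same fact as ``$\theta_{\frac{1}{2}}$ restricts to a permutation of the finite set of periodic points, hence is injective there.'' Both presentations are equivalent; yours is slightly more self-contained.
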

\begin{proof}
Since $f$ is irreducible,  $a_0 = 0$ only if $f(x) = x$. If this is the case, then $f^{R} (x) = x^2+1$ and $f^{R} (0) \not = 0$. If $a_0 \not = 0$, then the constant term of $f^{R}$ is equal to $1$. In fact, since
\begin{displaymath}
f^{R} (x) = (2 x)^n \cdot \left( 2^{-n} (x+x^{-1})^n + a_{n-1} \cdot 2^{-n+1} (x+x^{-1})^{n-1} + \dots + a_0 \right),
\end{displaymath}
we have that the constant term of $f^{R}$ is determined by the expansion of $(x+x^{-1})^n$ only. Therefore, the constant term of $f^{R}$ is equal to $1$ and we deduce that $0$ cannot be a root of $f^{R}$.

If $\alpha$ is a root of $f^{R}$, then $\alpha$ is invertible. Being $f^{R} (\alpha) = 0$ and $\alpha \not = 0$, we get that $f(\theta_{\frac{1}{2}} (\alpha) ) = 0$. Since $f^{R} (\alpha^{-1}) = (2 \alpha^{-1})^n \cdot f(\theta_{\frac{1}{2}}(\alpha))$, we get that also $f^{R} (\alpha^{-1}) = 0$. We conclude that the set of roots of $f^{R}$ is  inverse-closed.

Consider now a root $\alpha$ of $f^{R}$. Since $\alpha \not = 0$, we have that $f ( \theta_{\frac{1}{2}} (\alpha) ) = 0$, namely $\gamma = \theta_{\frac{1}{2}} (\alpha) $ is a root of $f$. Moreover, $\alpha$ is root of the polynomial $x^2 - 2 \gamma x +1$. Summing all up, we conclude that the degree of $\alpha$ over $\F_p$ is either $n$ or $2n$. In the latter case the minimal polynomial of $\alpha$ has degree $2n$ and must be equal to $f^{R}$. In the former case the minimal polynomial $m_{\alpha}$ of $\alpha$ has degree $n$ and by Theorem \ref{seq_3} the set of roots of $m_{\alpha}$ is not inverse-closed (on the contrary, $f$, which is the minimal polynomial of $\theta_{\frac{1}{2}} (\alpha)$, should have degree $n/2$). Hence, using the notation of the claim, $f^{R} (x) = m_{\alpha} (x) \cdot m_{\alpha^{-1}} (x)$. Moreover, we observe that $\theta_{\frac{1}{2}}(x) = \gamma$ if and only if $x = \alpha$ or $\alpha^{-1}$. If $\gamma$ is not $\theta_{\frac{1}{2}}$-periodic, then both $\alpha$ and $\alpha^{-1}$ cannot be $\theta_{\frac{1}{2}}$-periodic too. On the converse, one among $\alpha$ and $\alpha^{-1}$ is $\theta_{\frac{1}{2}}$-periodic, while the other element belongs to the level $1$ of the tree rooted in $\gamma$.
\end{proof}

\section{Constructing sequences of irreducible polynomials}\label{construction}
We fix once for all current section an odd prime $p$ and a positive integer $n$. The following theorem furnishes a procedure for constructing an infinite sequence of irreducible polynomials, starting from any polynomial of $\Irr_p (n)$.
\begin{theorem}\label{seq_6}
Let $f_0 \in \Irr_p (n)$, with $\nu_2 (p^n-1) = {e_0}$ and $\nu_2 (p^{2n}-1) = {e_1}$ for positive integers $e_0, e_1$ with $e_0 < e_1$.

If $f_0^{R}$ is irreducible, define $f_1 := f_0^{R}$. Otherwise, set $f_1$ equal to one of the two monic irreducible factors of degree $n$ of $f_0^R$ having a root which is not $\theta_{\frac{1}{2}}$-periodic, as stated in Theorem \ref{seq_4}.  

For $i \geq 2$ define inductively a sequence of polynomials $\{ f_i \}_{i \geq 2}$ in such a way: if $f_{i-1}^{R}$ is irreducible, then $f_i := f_{i-1}^{R}$; otherwise, set $f_i$ equal to one of the two irreducible factors of degree $n$ of $f_{i-1}^{R}$ as stated in Theorem \ref{seq_4}.

Then, there exist two positive integers $s_1, s_2$ such that:
\begin{itemize}
\item $f_0, \dots, f_{s_1-1} \in \Irr_p(n)$;
\item $f_{s_1}, \dots, f_{s_1+s_2-1} \in \Irr_p(2n)$;
\item $f_{s_1+s_2+j} \in \Irr_p (2^{2+j} n)$ for any $j \geq 0$;
\item $s_1 \leq e_0+1$ and $s_2 = e_1 - e_0$.
\end{itemize}
\end{theorem}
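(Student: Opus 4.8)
The plan is to conjugate $\theta_{\frac{1}{2}}$ to the squaring map via \eqref{conj} and reduce the statement to bookkeeping on the $2$-adic valuations of the orders of a single compatible chain of roots, using the reversed binary forest description of Theorem~\ref{thm_tree}. First I would fix a root $\beta_0$ of $f_0$ and build inductively roots $\beta_i$ of $f_i$ with $\theta_{\frac{1}{2}}(\beta_i)=\beta_{i-1}$; this is possible because, by Lemma~\ref{seq_2}, by the description of the factors of $f_{i-1}^{R}$ in Theorem~\ref{seq_4}, and because $\theta_{\frac{1}{2}}$ and $\psi$ commute with the Frobenius, the roots of $f_i$ are mapped by $\theta_{\frac{1}{2}}$ onto the roots of $f_{i-1}$ whichever factor is chosen. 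Each $f_i$ lies in $\Irr_p$, hence is the minimal polynomial of $\beta_i$, so $\deg f_i=[\F_p(\beta_i):\F_p]$; since $\beta_i\notin\{-1,1\}$ the element $\delta_i:=\psi(\beta_i)$ is a nonzero element of a finite extension of $\F_p$ with $\F_p(\delta_i)=\F_p(\beta_i)$ (because $\psi$ is an involution over $\F_p$), and \eqref{conj} turns $\theta_{\frac{1}{2}}(\beta_i)=\beta_{i-1}$ into $\delta_i^{2}=\delta_{i-1}$. Hence the fields $\F_p(\delta_i)$ form an increasing chain with $\F_p(\delta_0)=\F_{p^{n}}$, so $\deg f_i=2^{t}n$ where $\F_{p^{2^{t}n}}$ is the smallest field containing $\delta_i$, and everything reduces to tracking how $\nu_2(\ord\delta_i)$ evolves under a square root.

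Next I would analyse a single step of the recursion. For a nonzero $\zeta\in\F_{p^{d}}$ call $\nu_2(\ord\zeta)$ its \emph{level}; by Theorem~\ref{thm_tree} the leaves of the squaring forest over $\F_{p^{d}}$ are exactly the elements of level $\nu_2(p^{d}-1)$, and a routine order computation shows $\zeta$ is a square in $\F_{p^{d}}$ if and only if its level is $<\nu_2(p^{d}-1)$. Writing $d=\deg f_{i-1}$, I would then check, again by elementary order computations, that: (i) if $\delta_{i-1}$ has level $<\nu_2(p^{d}-1)$, then $\delta_i\in\F_{p^{d}}$, the polynomial $f_{i-1}^{R}$ is reducible, $\deg f_i=d$, and $\delta_i$ has level one higher than $\delta_{i-1}$ --- with the one exception that if $\delta_{i-1}$ is periodic (level $0$) exactly one of its two square roots is periodic, and the construction's choice of a non-periodic factor makes $\delta_i$ have level $1$; (ii) if $\delta_{i-1}$ has level $\nu_2(p^{d}-1)$, then $\delta_i\notin\F_{p^{d}}$, so $f_{i-1}^{R}$ is irreducible, $\deg f_i=2d$, and $\delta_i$ has level $\nu_2(p^{d}-1)+1$ in $\F_{p^{2d}}$. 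In particular $\delta_i$ has level $\geq1$ for every $i\geq1$, so the sequence never re-enters a cycle.

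Then I would assemble the count. Put $a_0:=\nu_2(\ord\delta_0)$; since $\delta_0\in\F_{p^{n}}$ one has $a_0\leq e_0$. By (i) the level rises by exactly $1$ at each step while it is $<e_0$ (including the step out of a periodic $\delta_0$), so $\delta_0,\dots,\delta_{s_1-1}$ all lie in $\F_{p^{n}}$ with $s_1=e_0-a_0+1\leq e_0+1$; that is, $f_0,\dots,f_{s_1-1}\in\Irr_p(n)$, and by (ii) $\delta_{s_1}$ has level $e_0+1$ in $\F_{p^{2n}}$, so $\deg f_{s_1}=2n$. Since the leaf level of $\F_{p^{2n}}$ is $e_1$, (i) makes the level climb through $e_0+1,e_0+2,\dots,e_1$, whence $f_{s_1},\dots,f_{s_1+s_2-1}\in\Irr_p(2n)$ with $s_2=e_1-e_0$, and (ii) applied at level $e_1$ puts $\delta_{s_1+s_2}$ at level $e_1+1$ in $\F_{p^{4n}}$, so $\deg f_{s_1+s_2}=4n$. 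Finally, I would invoke Lemma~\ref{seq_1} (whose hypothesis holds since $e_1>e_0\geq1$) to prove by induction that $\nu_2(p^{2^{k}n}-1)=e_1+k-1$ for all $k\geq1$, so for $k\geq2$ the chain enters $\F_{p^{2^{k}n}}$ at precisely its leaf level; by (ii) the next step jumps again, and hence $f_{s_1+s_2+j}\in\Irr_p(2^{2+j}n)$ for every $j\geq0$. Both $s_1\geq1$ and $s_2=e_1-e_0\geq1$ are positive integers, as required.

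I expect the single-step analysis, i.e. (i) and (ii) above, to be the main obstacle: one must pin down exactly how the level behaves under a square root --- that from level $\geq1$ \emph{both} square roots rise by exactly one, that at a leaf the two square roots leave $\F_{p^{d}}$ and re-enter the forest of $\F_{p^{2d}}$ at level $\nu_2(p^{d}-1)+1$, and that the construction's factor-choice rule (a non-periodic factor at the first step, either factor afterwards) is precisely what is needed to escape the cyclic part in the one case $a_0=0$ where that choice matters.
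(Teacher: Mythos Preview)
Your proposal is correct and follows essentially the same approach as the paper: both fix a compatible chain of roots $\beta_i$ with $\theta_{\frac{1}{2}}(\beta_i)=\beta_{i-1}$, track the level of $\beta_i$ in the reversed binary tree of Theorem~\ref{thm_tree}, and use Lemma~\ref{seq_1} to show that once the chain reaches a leaf of $G_{\theta_{\frac{1}{2}}}^{p^{2n}}$ every subsequent step doubles the degree. The only presentational difference is that you conjugate explicitly to the squaring map via $\psi$ and phrase the level as $\nu_2(\ord\delta_i)$, whereas the paper works directly with the $\theta_{\frac{1}{2}}$-tree; this makes your single-step dichotomy (i)/(ii) a bit cleaner to state, but the underlying argument and the use of the auxiliary results are the same.
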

\begin{proof}
Let $\beta_0 \in \F_{p^n}$ be a root of $f_0$. In $G_{\theta_{\frac{1}{2}}}^{p^n}$ the vertex $\beta_0$ lies on the level $k \geq 0$ of some binary tree of depth $e_0$ rooted in an element $\gamma \in \F_{p^n}$. In particular, if $k = 0$, then $\beta_0  = \gamma$. 

If $f_0^{R}$ is irreducible of degree $2n$, then we set $f_1 := f_0^{R}$. The equation $\theta_{\frac{1}{2}}(x) = \beta_0$ has exactly two solutions $\beta_1$ and $\beta_1^{-1}$ in $\F_{p^{2n}}$. By Lemma \ref{seq_2}, $\beta_1$ and $\beta_1^{-1}$ are roots of $f_1 \in \Irr_p (2n)$.  All considered we can say that $\beta_0$ is a leaf of $G_{\theta_{\frac{1}{2}}}^{p^n}$. Therefore, in this case $k=e_0$ and $s_1 = 1$.

If $f_0^{R}$ is not irreducible, then we define $f_1$ as one of the two monic irreducible factors of degree $n$ of $f_{0}^{R}$ having a root $\beta_1$, which is not $\theta_{\frac{1}{2}}$-periodic.
 We prove that, for any integer $i$ such that $0 \leq i \leq e_0-k$, there exists an element $\beta_i \in \F_{p^n}$ such that $\theta_{\frac{1}{2}}^{k+i} (\beta_i) = \gamma$ and $\beta_i$ is a root of $f_i$. Indeed, this is trivially true if $i=0$. Suppose that, for some $i < e_0-k$, there exists an element $\beta_i$ such that $\theta_{\frac{1}{2}}^{k+i} (\beta_i) = \gamma$ and $\beta_i \in \F_{p^n}$ is a root of $f_i$. Since $k+i < e_0$ and the tree rooted in $\gamma$ has depth $e_0$, there exists an element $\beta' \in \F_{p^n}$ such that $\theta_{\frac{1}{2}} (\beta') = \beta_i$. By Lemma \ref{seq_2} the element $\beta'$ is a root of $f_i^{R}$. Since $\beta' \in \F_{p^n}$, the polynomial $f_i^{R}$ splits into the product of two  polynomials $g_1, g_2 \in \Irr_p (n)$. One among $g_1$ and $g_2$ is equal to $f_{i+1}$. Moreover, either $\beta'$ or $(\beta')^{-1}$ is root of $f_{i+1}$. We can say, without loss of generality, that $\beta'$ is root of $f_{i+1}$. Therefore, setting $\beta_{i+1} := \beta'$, we get that $\beta_{i+1}$ is a root of $f_{i+1}$ and $\theta_{\frac{1}{2}}^{k+i+1} (\beta_{i+1}) = \gamma$. Now, consider the polynomial $f_{e_0-k}$. By construction $\beta_{e_0-k} \in \F_{p^n}$ is a root of $f_{e_0-k}$. Moreover, $\beta_{e_0-k}$ is a leaf of the tree of $G_{\theta_{\frac{1}{2}}}^{p^n}$ rooted in $\gamma$. Consider now an element $\beta$ such that $\theta_{\frac{1}{2}} (\beta) = \beta_{e_0-k}$. Since $\beta$ cannot belong to the same tree of $G_{\theta_{\frac{1}{2}}}^{p^n}$, we have that $\beta \in \F_{p^{2n}} \backslash \F_{p^n}$. Therefore, $f_{s_1} := f_{e_0-k+1} = f_{e_0-k}^{R}$ is irreducible of degree $2n$. Since $k \geq 0$, the index $s_1 = e_0-k+1 \leq e_0+1$. 

Now we prove by induction on $i$ that, for any integer $i$ such that $e_0-k+1 \leq i \leq e_1 - k$, there exists an element $\beta_i \in \F_{p^{2n}}$ such that $\theta_{\frac{1}{2}}^{k+i} (\beta_i) = \gamma$ and $\beta_i$ is a root of $f_i$. In virtue of what we have just proved, this is true if $i = e_0-k+1$. Take now an integer $e_0-k+1 \leq i < e_1-k$. By inductive hypothesis there exists an element $\beta_i \in \F_{p^{2n}}$ such that $\theta_{\frac{1}{2}}^{k+i} (\beta_i) = \gamma$ and $\beta_i$ is a root of $f_i$. Since $\beta_i$ belongs to the level $i +k < e_1$ of the tree of $G_{\theta_{\frac{1}{2}}}^{p^{2n}}$ rooted in $\gamma$,  there exists an element $\beta' \in \F_{p^{2n}}$ such that $\theta_{\frac{1}{2}} (\beta') = \beta_i$. Then, one among $\beta'$ and $(\beta')^{-1}$, say $\beta'$, is root of $f_{i+1}$. We set $\beta_{i+1} := \beta'$ and complete the inductive proof. Since $\beta_{e_1-k}$ is a leaf of the tree of $G_{\theta_{\frac{1}{2}}}^{p^{2n}}$ rooted in $\gamma$, any element $\beta$ such that $\theta_{\frac{1}{2}} (\beta) = \beta_{e_1-k}$ cannot belong to $\F_{p^{2n}}$. Then, $f_{s_1+s_2}:=f_{e_1-k+1}$ has degree $4n$. Since $s_1 = e_0-k+1$ we conclude that $s_2 = e_1 - e_0$.

Finally we prove by induction on $j$ that, for any integer $j \geq 0$, there exists an element $\gamma_{s_1+s_2+j} \in \F_{p^{2^{2+j} n}}$ such that:
\begin{itemize}
\item $\gamma_{s_1+s_2+j}$ has degree $2^{2+j} n$ over $\F_p$; 
\item $\gamma_{s_1+s_2+j}$ is a leaf of the tree of $G_{\theta_{\frac{1}{2}}}^{p^{2^{2+j}n}}$ rooted in $\gamma$;
\item $f_{s_1+s_2+j}(\gamma_{s_1+s_2+j})=0$ and consequently $f_{s_1+s_2+j} \in \Irr_p(2^{2+j} n)$.
\end{itemize}

Consider firstly the base step. We denote by $\gamma_{s_1+s_2}$ one of the roots of $f_{s_1+s_2}$ in $\F_{p^{4n}}$. Then, $\gamma_{s_1+s_2}$ has degree $2^2n$ over $\F_p$ and $f(\gamma_{s_1+s_2}) = 0$. We know that $\theta_{\frac{1}{2}} (\gamma_{s_1+s_2}) = \beta_{e_1-k}$ and $\beta_{e_1-k}$ belongs to the level $\nu_2 (p^{2n}-1)$ of the tree of $G_{\theta_{\frac{1}{2}}}^{p^{2n}}$ rooted in $\gamma$. Therefore, $\gamma_{s_1+s_2}$ belongs to the level $e_1+1$ of the tree of $G_{\theta_{\frac{1}{2}}}^{p^{4n}}$ rooted in $\gamma$. In addition, $\gamma_{s_1+s_2}$ is a leaf of the same tree, since the depth of such tree in $G_{\theta_{\frac{1}{2}}}^{p^{4n}}$ is
\begin{equation*}
\nu_2 ( p^{4n}-1 ) = \nu_2 (p^{2n}-1) + 1, 
\end{equation*} 
in accordance with Theorem \ref{thm_tree} and Lemma \ref{seq_1}.

As regards the inductive step, let $j$ be a non-negative integer and consider an element $\gamma_{s_1+s_2+j+1} \in \F_{p^{2^{2+j+1}n}}$ such that $\theta_{\frac{1}{2}} (\gamma_{s_1+s_2+j+1}) = \gamma_{s_1+s_2+j}$. Since $\gamma_{s_1+s_2+j}$ is a leaf of the tree of  $G_{\theta_{\frac{1}{2}}}^{p^{2^{2+j}n}}$ rooted in $\gamma$ by inductive hypothesis, $\gamma_{s_1+s_2+j+1}$ has degree $2^{2+j+1} n$ over $\F_p$. Moreover, being $f_{s_1+s_2+j}^R (\gamma_{s_1+s_2+j+1}) = 0$, it follows that $f_{s_1+s_2+j+1} = f_{s_1+s_2+j}^R$, namely $f_{s_1+s_2+j+1} \in \Irr_p(2^{2+j+1} n)$. To end with, $\gamma_{s_1+s_2+j+1}$ is a leaf of the tree of $G_{\theta_{\frac{1}{2}}}^{p^{2^{2+j+1}n}}$ rooted in $\gamma$, since such a tree has depth
\begin{equation*}
\nu_2 ( p^{2^{2+j+1}n}-1 ) = \nu_2 (p^{2^{2+j} n}-1) + 1, 
\end{equation*} 
being $\nu_2 (p^{2^{2+j} n}-1)$ the depth of the tree of $G_{\theta_{\frac{1}{2}}}^{p^{2^{2+j}n}}$ rooted in $\gamma$.
\end{proof}

\begin{remark}\label{seq_7}
Using the notation of  Theorem \ref{seq_6}, if $f_0^{R}$ is not irreducible, then it splits into the product of two irreducible polynomials $g_1, g_2$ of equal degree. By Theorem \ref{seq_4} one of them must have a root which is not $\theta_{\frac{1}{2}}$-periodic. In principle we do not know which of the two polynomials has this property. Therefore, we just set $f_1$ equal to either $g_1$ or $g_2$. Then we proceed constructing the sequence as stated in the theorem's claim. If none of the polynomials $f_i$, for $i \leq e_0+1$, has degree $2n$, then we break the procedure and set $f_1 := g_2$, which will have a non-$\theta_{\frac{1}{2}}$-periodic root, as stated by Theorem \ref{seq_4} (see also Example \ref{exm_1}).

Summing all up, we must test the irreducibility of a polynomial of degree at most $4n$ over $\F_p$, and eventually factor it, not more than $e_0+e_1+1$ times. Indeed, if $f_0$ has no $\theta_{\frac{1}{2}}$-periodic root or all its roots are $\theta_{\frac{1}{2}}$-periodic  but we pick up the factor of $f_0^R$ which has no $\theta_{\frac{1}{2}}$-periodic root, then the number of factorization is at most $e_1+1$.  
\end{remark}

We conclude this section with two examples of sequences with initial polynomial $f_0$ belonging  to $\Irr_7(1)$.
\begin{example}
Let $f_0 (x) := x \in \Irr_7(1)$. In accordance with the notation of Theorem \ref{seq_6} we have that $e_0 = \nu_2 (6) = 1$ and $e_1 = \nu_2 (48) = 4$.
The polynomial $f_1 (x) := f_0^{R} (x) = x^2+1$ is irreducible.  Since $e_1=4$ and $e_0=1$  we expect that $f_2$ and $f_3$ belong to $\Irr_7 (2)$, while $f_4$ belongs to $\Irr_7 (4)$. 

The polynomial $f_1^{R} (x) = x^4-x^2+1$ splits into the product of two irreducible factors, namely $f_1^{R} (x) = (x^2+2) \cdot (x^2+4)$.  We set $f_2 (x):= x^2+2$. Now, $f_2^{R} (x) = x^4+3 x^2+1$ splits into the product of two irreducible factors as $f_2^{R} (x)=(x^2+3x-1) \cdot (x^2+4x-1)$. We set $f_3 (x) := x^2+3x-1$.

The polynomial $f_3^{R} (x) = x^4-x^3-2x^2-x+1$ is irreducible. Hence we set $f_4 := f_3^{R}$. Now, for $i \geq 3$, any polynomial $f_{i+1} := f_i^{R}$ is irreducible, namely we can construct an infinite sequence of irreducible polynomials whose degree doubles at each step. 
\end{example}

\begin{example}\label{exm_1}
Let $f_0 (x) := x-3 \in \Irr_7 (1)$. Since $\nu_2(6)=1$, using the notation of Theorem \ref{seq_6} we have that $s_1 \leq 2$. Therefore, in the sequence we are going to construct, at most the polynomials $f_0$ and $f_1$ have degree $1$. The polynomial $f_0^{R} (x) = x^2+x+1$ is not irreducible. Indeed, $f_0^{R} (x) = (x-4) (x-2)$. We set $f_1$ equal to one among the two factors of degree $1$ of $f_0^{R}$. For example, set $f_1 (x) := x-4$. The polynomial $f_1^{R} (x) = x^2-x+1$ factors as $f_1^{R} (x) = (x-3)(x-5)$. If we set $f_2$ equal to any of the factors of degree $1$ of $f_1^{R}$ we get that $f_2$ is a polynomial of degree $1$ too. Hence, we break the procedure and change the polynomial $f_1$ as suggested in Remark \ref{seq_7}.

Set $f_1 (x) := x-2$. Now, $f_1^{R} (x) = x^2+3x+1$ is irreducible in $\F_7 [x]$. Therefore we set $f_2 (x) := x^2+3x+1$. Now, $f_2^{R} (x) = x^4-x^3-x^2-x+1$. We notice that $f_2^{R} (x) = (x^2+x+3) \cdot (x^2-2x-2)$, where both the factors of degree two belong to $\Irr_7(2)$. Set $f_3 (x) := x^2+x+3$. Now, $f_3^{R} (x) = x^4+2x^3+2x+1$ splits into the product of two irreducible polynomials of $\Irr_7(2)$, namely $f_3^{R} (x) = (x^2-3x-2) \cdot (x^2-2x+3)$. Set $f_4 (x) := x^2-3x-2$. We have that $f_4^{R} (x) = x^4+x^3+x^2+1 \in \Irr_7(4)$. Therefore we set $f_5 := f_4^{R}$ and, in virtue of Theorem \ref{seq_6}, we are guaranteed that any polynomial $f_{i+1} := f_{i}^{R}$ will be irreducible for $i \geq 4$.
\end{example} 

\section{A note about Theorem \ref{seq_6}} \label{note}
In a generic step of the iterative procedure described in Theorem \ref{seq_6} we have to decide whether the polynomial $f_i^{R}$ is irreducible or not and, in the latter case, factoring it. Dropping the indices off, the problem we are dealing with consists in deciding if, taken an irreducible polynomial $f$ of degree $n$ of $\F_p[x]$ where $p$ is an odd prime and $n$ a positive integer,  the polynomial $f^{R}$ is irreducible or not. In the latter case we have to find two irreducible monic polynomials, say $g_1$ and $g_2$, of degree $n$ such that $f^{R} (x) = g_1(x) \cdot g_2 (x)$. 

If $\beta \in \F_{p^n}$ is a root of $f$, then any element of $\F_{p^n}$ is expressible as 
\begin{equation}\label{note_1}
c_{n-1} \beta^{n-1} + \dots + c_1 \beta + c_0,
\end{equation} 
where $c_{n-1}, \dots, c_1, c_0 \in \F_p$.

If $\alpha$ is a solution of the equation $\theta_{\frac{1}{2}}(x) = \beta$, then $\alpha$ is a root of $f^{R}$. The fact that $\theta_{\frac{1}{2}}(\alpha) = \beta$ is equivalent to saying that $\alpha$ is root of $x^2 - 2 \beta  x +1$, namely 
\begin{equation*}
\alpha = \beta + \sqrt{\beta^2-1}
\end{equation*}
for some square root $\sqrt{\beta^2-1}$ of $\beta^2-1$.
Therefore, either $\alpha \in \F_{p^n}$ or $\alpha \in \F_{p^{2n}}$. In particular,  $\alpha \in \F_{p^n}$ if and only if $\beta^2-1$ is a square in $\F_{p^n}$, namely if and only if $(\beta^2-1)^{\frac{p^n-1}{2}} = 1$ in $\F_{p^n} \cong \F_p [x] / (f)$. If this latter test fails, then we can conclude that $f^{R}$ is irreducible. On the contrary, we can find a square root of $\beta^2-1$ as explained for example in the proof of Lemma 7.7 of \cite{vdw}, which relies upon Theorem VI.6.1 of \cite{lan}. To do that, set $a = \beta^2-1$. Following the steps of the proof we define $A = a^{(p-1)/2}$ and look for a non-zero element $c \in \F_{p^n}$ such that 
\begin{equation}
c^p = A c.
\end{equation}
Since any $c \in \F_{p^n}$ can be expressed as in (\ref{note_1}), solving the last equation amounts to finding the coefficients $c_i \in \F_p$ which satisfy the equation
\begin{equation}\label{note_2}
c_{n-1} \beta^{p \cdot (n-1)} + \dots + c_1 \beta^p + c_0 = A \cdot (c_{n-1} \beta^{n-1} + \dots + c_1 \beta + c_0).
\end{equation}
Any exponent in the powers of $\beta$ can be reduced to a positive integer smaller than $n$, since $f(\beta) = 0$ and $f§$ has degree $n$. Therefore, solving (\ref{note_2}) amounts to finding a solution 
\begin{displaymath}
(c_0, \dots, c_{n-1}) \in \F_p^n
\end{displaymath}
of a linear system of at most $n$ linear equations.
Once we have found such a $c$, we notice that $c^2/a$ is a quadratic residue in $\F_p$. Finally, we find a square root $d$ of $c^2/a$ in $\F_p$ and notice that $c/d$ is a square root of $a$.

Summing all up, $c/d \in \F_{p^n}$ can be expressed as a linear combination of $1$, $\beta$, $\beta^2, \dots, \beta^{{n-1}}$ with coefficients in $\F_p$. Substituting $c/d$ in place of $\sqrt{\beta^2-1}$ we express $\alpha$ as linear combination of the powers $\beta^{i}$ with $0 \leq i \leq n-1$.

To end with, we can factor $f^{R}$ as the product of two irreducible factors $g_1 (x)$, $g_2 (x)$ in $\F_p [x]$ of degree $n$, namely
\begin{displaymath}
g_1(x) = \prod_{i=0}^{n-1} \left( x-\alpha^{p^i} \right) \quad \quad g_2 (x) = \prod_{i=0}^{n-1} \left( x - (\alpha^{-1})^{p^i} \right).
\end{displaymath}

\begin{example}
Let $f(x) = x^3 + 3 x^2 + 2 \in \F_5[x]$. Then,
\begin{displaymath}
f^{R} (x) =  x^6 + x^5 + 3 x^4 + 3 x^3+ 3 x^2 + x+ 1.
\end{displaymath}
Following the steps explained above we want to decide if $f^{R}$ is irreducible or not and, in the latter case, factor it. Let $\beta$ be a root of $f$. We know that $f^{R}$ is irreducible if and only if $\beta^2-1$ is not a square in $\F_{5^3} \cong \F_5 [x] / (f)$. Since
\begin{displaymath}
(x^2-1)^{62} = 1 \quad \text{in $\F_5 [x] / (f)$},
\end{displaymath}
we conclude that $a=\beta^2 - 1$ is a quadratic residue in $\F_{5^3}$. Therefore, $f^{R}(x) = g_1 (x) \cdot g_2(x)$, where $g_1, g_2$ are two monic irreducible polynomials of degree $3$ of $\F_5 [x]$. Aiming to find the polynomials $g_1, g_2$, we look for an element 
\begin{displaymath}
c = c_2 \beta^2 + c_1 \beta + c_0 \in \F_{125}^*
\end{displaymath}
such that $c^5 = A \cdot c$, where $A = a^2$ and $a=\beta^2-1$. Expanding the last equation our problem is equivalent to finding three coefficients $c_0, c_1, c_2 \in \F_5$, not simultaneously equal to zero, such that
\begin{displaymath}
c_2 \cdot \beta^{10} + c_1 \beta^5 + c_0 = A \cdot (c_2 \beta^2 + c_1 \beta + c_0).
\end{displaymath} 
Expanding the left hand side of the last equation we get
\begin{eqnarray*}
c_2 \cdot (3 \beta^2 + 2 \beta +1 ) + c_1 \cdot (\beta^2+\beta+2) + c_0,
\end{eqnarray*}
while expanding the right hand side we get
\begin{eqnarray*}
 c_2 \cdot (\beta^2 + \beta + 1) + c_1 \cdot (2 \beta^2 +2 \beta +1) + c_0 \cdot (2 \beta^2 + 3 \beta +2).
\end{eqnarray*}
Therefore, solving $c^5 = A \cdot c$ amounts to solving the following linear system over $\F_5$:
\begin{displaymath}
\left\{
\begin{array}{rrr}
2 c_2 - c_1 - 2 c_0 & = & 0\\
c_2 - c_1 - 3 c_0 & = & 0\\
c_1 -c_0 & = & 0 
\end{array}
\right.
\end{displaymath}
It is easily seen that such a system has infinitely many solutions. More precisely, for a free choice of $c_0 \in \F_5$, the other coefficients are uniquely determined as $c_1 = c_0$ and $c_2 = - c_0$. For example, choosing $c_0 = 1$ we get $c_1 = 1$ and $ c_2 = -1$. Hence,
\begin{displaymath}
c = - \beta^2 + \beta +1 \quad \text{and} \quad \dfrac{c^2}{a} = 4.
\end{displaymath}
A square root of $4$ in $\F_5$ is $d = 2$. Therefore,
\begin{displaymath}
\dfrac{c}{d} = 2 \beta^2 - 2 \beta -2
\end{displaymath}
is a square root of $\beta^2-1$. We conclude that 
\begin{displaymath}
\alpha = 2 \beta^2 - \beta - 2
\end{displaymath}
is a solution of $\theta_{\frac{1}{2}}(x) = \beta$. Therefore, the polynomial
\begin{displaymath}
g_1 (x) = (x- \alpha) \cdot (x- \alpha^5) \cdot (x- \alpha^{25}) = x^3 + 3 x +3 
\end{displaymath}
is a monic irreducible factor of $f^{R}$. Now we can easily find the other factor and conclude that
\begin{displaymath}
f^{R} (x) = g_1 (x) \cdot g_2 (x) = (x^3+3x+3) \cdot (x^3+x^2+2). 
\end{displaymath}
\end{example}

\bibliography{Refs}

\providecommand{\bysame}{\leavevmode\hbox to3em{\hrulefill}\thinspace}
\providecommand{\MR}{\relax\ifhmode\unskip\space\fi MR }
% \MRhref is called by the amsart/book/proc definition of \MR.
\providecommand{\MRhref}[2]{%
  \href{http://www.ams.org/mathscinet-getitem?mr=#1}{#2}
}
\providecommand{\href}[2]{#2}
\begin{thebibliography}{10}

\bibitem{cho}
W.~Chou and I.~Shparlinski, \emph{On the the cycle structure of repeated
  exponentiation modulo a prime}, J. {N}umber {T}heory \textbf{107} (2004),
  no.~2, 345--356.

\bibitem{chu}
W.~M. Chu, \emph{Construction of irreducible polynomials using cubic
  transformation}, Appl. Algebra Engrg. Comm. Comput. \textbf{7} (1996), no.~1,
  15--19.

\bibitem{coh}
S.~D. Cohen, \emph{The explicit construction of irreducible polynomials over
  finite fields}, Des. Codes Cryptogr. \textbf{2} (1992), no.~2, 169--174.

\bibitem{gar}
T.~Garefalakis, \emph{On the action of ${G}{L}_2 (\mathbf{F}_q)$ on irreducible
  polynomials over $\mathbf{F}_q$}, J. Pure Appl. Algebra \textbf{215} (2011),
  no.~8, 1835--1843.

\bibitem{kyu1}
M.~K. Kyuregyan, \emph{Recurrent methods for constructing irreducible
  polynomials over {$\mathbf{F}_q$} of odd characteristics}, Finite Fields
  Appl. \textbf{9} (2003), no.~1, 39--58.

\bibitem{kyu2}
\bysame, \emph{Recurrent methods for constructing irreducible polynomials over
  {$\mathbf{F}_q$} of odd characteristics. {II}}, Finite Fields Appl.
  \textbf{12} (2006), no.~3, 357--378.

\bibitem{lan}
S.~Lang, \emph{Algebra}, Springer-Verlag, New York, 2002.

\bibitem{mey}
H.~Meyn, \emph{On the construction of irreducible self-reciprocal polynomials
  over finite fields}, Appl. Algebra Engrg. Comm. Comput. \textbf{1} (1990),
  no.~1, 43--53.

\bibitem{rog}
T.~G. Rogers, \emph{The graph of the square mapping on the prime fields},
  Discrete Math. \textbf{148} (1996), no.~1-3, 317--324.

\bibitem{sha}
M.~Sha and S.~Hu, \emph{Monomial dynamical systems of dimension over finite
  fields}, Acta Arith. \textbf{148} (2011), no.~4, 309--331.

\bibitem{sti}
H.~Stichtenoth and A.~Topuzo\u{g}lu, \emph{Factorization of a class of
  polynomials over finite fields}, Finite Fields Appl. \textbf{18} (2012),
  no.~1, 108--122.

\bibitem{seq2}
S.~Ugolini, \emph{Sequences of binary irreducible polynomials}, Preprint arXiv
  (2012).

\bibitem{SUk}
\bysame, \emph{On the iterations of certain map $x \mapsto k \cdot(x+x^{-1})$
  over finite fields of odd characteristic}, Preprint arXiv (2013).

\bibitem{vdw}
C.~van~de Woestijne, \emph{Deterministic equation solving over finite fields},
  Doctoral thesis, Leiden University, 2006,
  \url{http://hdl.handle.net/1887/4392}.

\bibitem{vas}
T.~Vasiga and J.~Shallit, \emph{On the iteration of certain quadratic maps over
  ${G}{F}(p)$}, Discrete Math. \textbf{277} (2004), no.~1-3, 219--240.

\end{thebibliography}
\end{document}